 \newtheorem{thm}{Theorem}[section]
 \newtheorem{prop}[thm]{Proposition}
 \theoremstyle{definition}
 \newtheorem{defn}[thm]{Definition}
 \theoremstyle{remark}
 \newtheorem{rem}[thm]{Remark}
 \newtheorem*{ex}{Example}
 \newtheorem{alg}{Algorithm}[section]
 \numberwithin{equation}{section}
\DeclareMathOperator{\ord}{ord} \DeclareMathOperator{\Card}{Card}
\DeclareMathOperator{\Ker}{Ker}
\DeclareMathOperator{\lcm}{lcm} \DeclareMathOperator{\lc}{lc}
\DeclareMathOperator{\Hom}{Hom}
\begin{document}
\def\D{\displaystyle}
%
%
%
\title[Multivariate Bernstein-type Polynomials of Finitely Generated $D$-Modules]{Multivariate Bernstein-type Polynomials of Finitely Generated $D$-Modules}

\author{Alexander Levin}
\address{Department of Mathematics, The Catholic University of
America, Washington, D.C. 20064}


\begin{abstract}
We generalize the Gr\"obner basis method for free $D$-modules to the case of several term orderings induced by a partition of the set of basic variables. Using this generalized Gr\"obner basis technique we
prove the existence and give a method of computation of a dimension polynomial in several variables associated with a finitely generated $D$-module. We show that this dimension polynomial carries more invariants of a $D$-module than the classical (univariate) Bernstein polynomial. We also show that the introduced multivariate dimension polynomial can be used for characterization of holonomic $D$-modules.

\end{abstract}

\maketitle
\section{Introduction}

In his fundamental work of 1971 \cite{B1} I. Bernstein introduced a Hilbert-type dimension polynomials for finitely generated modules over a Weyl algebras (also called $D$-modules) and extended the theory of multiplicity to the class of such modules. The results of this study have found many applications in singularity and monodromy theories; many analytical applications of Bernstein polynomials (also called Bernstein-Sato polynomials) can be found in a classical Bj\"ork's book \cite{Bj} and in a more recent monograph \cite{Co}. In particular, they allowed I. Bernstein \cite{B2} to prove Gelfand's conjecture on meromorphic extensions of functions $\Gamma_{f} (\lambda) = \int P^{\lambda}(x)f(x)dx$ in one complex variable $\lambda$ defined on the half-space $Re(\lambda) > 0$ for polynomials in $n$ real variables $P(x) = P(x_{1},\dots, x_{n})$  and for functions $f(x) = f(x_{1},\dots, x_{n})\in C_{0}^{\infty}(\mathbb{R}^{n})$.
The development of algorithmic methods for the study of $D$-modules started with the works of J. Brian, Ph. Maisonobe \cite{BM} and F.J. Castro-Jim\'{e}nez \cite{CJ} who adapted the theory of the Gr\"obner bases to Weyl algebras and free modules over them. These methods were essentially extended in works of M. Noro, T. Oaku, N. Takayama, N. (see \cite{N}, \cite{O1}, and \cite{O2}) and some other researchers. A good list of references on the subject can be found in a fundamental monograph \cite{SST} devoted to the computational analysis of holonomic systems and hypergeometric functions.

In this paper we extend the Gr\"obner basis method for free $D$-modules to the case of several term orderings that correspond to a fixed partition of the set of basic variables and the corresponding partition of the set of partial derivatives. We prove the existence, determine invariants, and outline methods of computation of multivariate Bernstein-type dimension polynomials associated with the natural multi-filtration that is determined by a set of generators of a finitely generated $D$-module and the above-mentioned partition of the set of variables. We show that such polynomials not only characterize the Bernstein class of $D$-modules, but also carry, in general, more invariants than the classical Bernstein polynomial. Note that the multivariate dimension polynomials introduced in this paper are of different type than bivariate dimension polynomials of filtered $D$-modules introduced in \cite{Levin2} and \cite{DL}. The latter polynomials are associated with the partition $X\bigcup\Delta$ of the set of generators of a Weyl algebra $A_{n}$, where $X=\{x_{1},\dots, x_{n}\}$ is the set of basic variables and $\Delta = \{\partial_{1},\dots, \partial_{n}\}$  is the set of the corresponding partial derivatives. These bivariate dimension polynomials carry more invariants of a finitely generated left $A_{n}$-module $M$ than the classical Bernstein polynomials, but not as many as the dimension polynomials introduced in our paper. (By an invariant of $M$ we mean a characteristic of this $A_{n}$-module that does not depend on the choice of a finite system of generators of $M$ over $A_{n}$.) Furthermore, the dimension polynomials introduced in this paper characterise holonomic modules (see Theorem 4.2 below) while the dimension polynomials introduced in \cite{Levin2} and \cite{DL} do not allow one to determine holonomicity.

\section{Preliminaries}

Throughout the paper, $\mathbb{N}$, $\mathbb{Z}$, and $\mathbb{Q}$ denote the sets of all non-negative integers, integers, and rational numbers, respectively; $\mathbb{Q}[t_{1},\dots, t_{p}]$ denotes the ring of polynomials in variables $t_{1},\dots, t_{p}$ over $\mathbb{Q}$. If $n\in\mathbb{Z}$, $n\geq 1$, then $\leq_{P}$ will denote the product order on $\mathbb{Z}^{n}$ (or $\mathbb{N}^{n}$), that is, a partial order $\leq_{P}$ such that $(a_{1},\dots, a_{n})\leq_{P}(a'_{1},\dots, a'_{n})$ if and only if $a_{i}\leq a'_{i}$ for $i=1,\dots, n$.

By a ring we always mean an associative ring with unity. Every ring homomorphism is unitary (maps unity to unity), every subring of a ring contains the unity of the ring. Unless otherwise indicated, by the module over a ring $R$ we mean a unitary left $R$-module. Furthermore, every field considered in this paper is supposed to have zero characteristic.

In what follows we consider a Weyl algebra $A_{n}(K)$ as an algebra of differential operators generated by the partial differentiations $\partial_{i} = \partial/\partial x_{i}$ (($1\leq i\leq n$) over a polynomial ring $R = K[x_{1},\dots, x_{n}]$ in $n$ variables $x_{1},\dots, x_{n}$ over a field $K$. Thus, $A_{n}(K)$ is a $K$-algebra generated by the elements $x_{1},\dots, x_{n}, \partial_{1}, \dots, \partial_{n}$
with defining relations $x_{i}x_{j} = x_{j}x_{i}$, $\partial_{i}\partial_{j} = \partial_{i}\partial_{j}$ ($1\leq i\leq j$),  $\partial_{i}x_{j} = x_{j}\partial_{i}$ if $i\neq j$, and $\partial_{i}x_{i} = x_{i}\partial_{i} + 1$ for $i=1,\dots, n$.  A left module over a Weyl algebra is said to be a {\em $D$-module}.

We shall denote multi-indices with non-negative integers by small Greek letters. Thus, monomials $x_{1}^{\alpha_{1}}\dots x_{n}^{\alpha_{n}}$ and $\partial_{1}^{\beta_{1}}\dots \partial_{n}^{\beta_{n}}$ are written as $x^{\alpha}$ and $\partial^{\beta}$, their total degrees $\alpha_{1}+\dots +\alpha_{n}$ and $\beta_{1}+\dots +\beta_{n}$ are denoted by $|\alpha|$ and $|\beta|$, respectively.

It is known (see \cite[Chapter 1, Proposition 1.2]{Bj}) that monomials $x^{\alpha}\partial^{\beta}$ ($\alpha, \beta \in\mathbb{N}^{n}$) form a basis
of $A_{n}(K)$ over the field $K$, so that every element $D\in A_{n}(K)$ can be written in a unique way as a finite sum $\sum_{\alpha, \beta} a_{\alpha \beta}x^{\alpha}\partial^{\beta}$ with the coefficients $a_{\alpha \beta}\in K$. If $D\neq 0$, then the number $\ord  D = \max\{|\alpha|+|\beta|\,|\,a_{\alpha \beta}\neq 0\}$ is called the order of the element $D$. We also set $\ord 0 = -\infty$.

Since $\ord (D_{1}D_{2}) = \ord D_{1} + \ord D_{2}$ for any $D_{1}, D_{2}\in A_{n}(K)\setminus\{0\}$, the Weyl algebra $A_{n}(K)$ can be considered as a filtered ring with the ascending filtration
$(W_{r})_{r\in\mathbb{Z}}$ where $W_{r} = \{D\in A_{n}(K) | \ord D\leq r\}$ for $r\in\mathbb{N}$ and $W_{r} = 0$, if $r<0$.

If $M$ is a finitely generated left $A_{n}(K)$-module with a system of generators $g_{1},\dots, g_{p}$, then $M$ can be naturally considered as
a filtered $A_{n}(K)$-module with the filtration $(M_{r})_{r\in\mathbb{Z}}$ where  $M_{r} = \sum_{i=1}^{p}W_{r}g_{i}$ for $r \in\mathbb{Z}$. It is clear
that each $M_{r}$ is a finitely generated $K$-vector space, $W_{r}M_{s} = M_{r+s}$ for all $r, s\in\mathbb{N}$, and $\bigcup_{r\in\mathbb{N}}M_{r} = M$.

\bigskip

The following result was first is proved in \cite{B1} (cf. {\cite[Chapter 1, Corollaries 3.3, 3.5, and Theorem 4.1]{Bj}).

\begin{thm}
With the above notation, there exists a polynomial $\psi_{M}(t)\in\mathbb{Q}[t]$ with the following properties.

\smallskip

{\em (i)}\, $\psi_{M}(r) = \dim_{K}M_{r}$ for all sufficiently large $r\in\mathbb{Z}$ (i.e., there exists $r_{0}\in\mathbb{Z}$ such that the last equality holds for all integers $r\geq r_{0}$);

\smallskip

{\em (ii)}\, $n\leq \deg\,\psi_{M}(t)\leq 2n$;

\smallskip

{\em (iii)}\, If $\psi_{M}(t) = a_{d}t^{d} +\dots + a_{1}t + a_{0}$ ($a_{d},\dots, a_{1}, a_{0}\in\mathbb{Q}$), then the degree $d$ of the polynomial $\psi(t)$ and the integer $d!a_{d}$ do not depend on the choice of the system of generators $g_{1},\dots, g_{p}$ of $M$. These numbers are denoted by $d(M)$ and $e(M)$, they are called the Bernstein dimension and multiplicity of the module $M$, respectively.
\end{thm}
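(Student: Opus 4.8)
The plan is to reduce everything to the associated graded objects and then combine the Hilbert--Serre theorem (for polynomiality and the upper bound) with Bernstein's inequality (for the lower bound). First I would form $\operatorname{gr} A_{n}=\bigoplus_{r\in\mathbb{N}}W_{r}/W_{r-1}$. The defining relations show that for all $i,j$ the commutator $[\partial_{i},x_{j}]$ lies in $W_{0}$, so the principal symbols of $x_{1},\dots,x_{n},\partial_{1},\dots,\partial_{n}$ (all of degree $1$) commute in $\operatorname{gr} A_{n}$; together with the $K$-basis $\{x^{\alpha}\partial^{\beta}\}$ of $A_{n}$ this identifies $\operatorname{gr} A_{n}$ with the polynomial ring $K[y_{1},\dots,y_{2n}]$, a commutative Noetherian graded $K$-algebra generated in degree $1$ by $2n$ elements. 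Since $M_{r}=\sum_{i=1}^{p}W_{r}g_{i}$, the associated graded module $\operatorname{gr} M=\bigoplus_{r}M_{r}/M_{r-1}$ is a finitely generated graded $\operatorname{gr} A_{n}$-module, generated by the images of $g_{1},\dots,g_{p}$.

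By the Hilbert--Serre theorem applied to $\operatorname{gr} M$, the function $r\mapsto\dim_{K}(M_{r}/M_{r-1})$ agrees for $r\gg 0$ with a polynomial $\varphi(t)\in\mathbb{Q}[t]$ of degree at most $2n-1$ (the Hilbert polynomial of a module over a polynomial ring in $2n$ variables). Because $M_{r}=0$ for $r<0$ and each $M_{r}$ is finite-dimensional, $\dim_{K}M_{r}=\sum_{s=0}^{r}\dim_{K}(M_{s}/M_{s-1})$, which for large $r$ equals a fixed constant plus $\sum_{s}\varphi(s)$; since the summatory function of a polynomial is again a polynomial of degree one higher (using $\sum_{s\le r}\binom{s+j}{j}=\binom{r+j+1}{j+1}$), there is $\psi_{M}(t)\in\mathbb{Q}[t]$ with $\psi_{M}(r)=\dim_{K}M_{r}$ for $r\gg 0$ and $\deg\psi_{M}=\deg\varphi+1\le 2n$. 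This gives (i) and the upper bound in (ii). (Assuming $M\ne 0$, as is implicit in (ii), $\psi_{M}$ is nonconstant, since a nonzero $D$-module is infinite-dimensional over $K$: otherwise the operators induced by $x_{1},\partial_{1}$ would satisfy $[\partial_{1},x_{1}]=\mathrm{id}$, impossible by taking traces.)

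The main obstacle is the lower bound $\deg\psi_{M}\ge n$ (Bernstein's inequality). I would prove it by showing that for every $k\ge 0$ the $K$-linear map
\[
  \rho_{k}\colon W_{k}\longrightarrow \Hom_{K}(M_{k},M_{2k}),\qquad \rho_{k}(D)(m)=Dm,
\]
is injective (note $W_{k}M_{k}\subseteq M_{2k}$), arguing by induction on $k$. For $k=0$ injectivity just says $M_{0}\ne 0$. For the step, let $D\in W_{k}$ with $DM_{k}=0$; if $\ord D<k$ then $DM_{k-1}=0$ and the inductive hypothesis gives $D=0$, so assume $\ord D=k\ge 1$. For $m\in M_{k-1}$ we have $x_{i}m\in W_{1}M_{k-1}\subseteq M_{k}$, hence $[D,x_{i}]m=D(x_{i}m)-x_{i}(Dm)=0$, and likewise $[D,\partial_{i}]M_{k-1}=0$; the elements $[D,x_{i}],[D,\partial_{i}]$ have order $<k$ (the same cancellation of top symbols that makes $\operatorname{gr} A_{n}$ commutative), so by the inductive hypothesis they all vanish. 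Then $D$ commutes with every $x_{i}$ and $\partial_{i}$, so $D$ lies in the center of $A_{n}$, which equals $K$ because $K$ has characteristic zero; since $M_{k}\ne 0$ and $DM_{k}=0$ this forces $D=0$, contradicting $\ord D\ge 1$. Injectivity of $\rho_{k}$ yields $\binom{2n+k}{2n}=\dim_{K}W_{k}\le\dim_{K}M_{k}\cdot\dim_{K}M_{2k}=\psi_{M}(k)\psi_{M}(2k)$ for $k\gg 0$; the left side has degree exactly $2n$ in $k$ and the right side degree $2\deg\psi_{M}$, so $2\deg\psi_{M}\ge 2n$, i.e. $\deg\psi_{M}\ge n$.

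Finally, for (iii) I would use that any two filtrations of $M$ coming from finite generating sets are equivalent: if $(M_{r})$ and $(M'_{r})$ arise from $\{g_{i}\}$ and $\{h_{j}\}$, then each $h_{j}$ lies in some $M_{c}$ and each $g_{i}$ in some $M'_{c}$, so for a common $c$ one has $M'_{r}\subseteq M_{r+c}$ and $M_{r}\subseteq M'_{r+c}$ for all $r$. Hence the associated dimension polynomials $\psi,\psi'$ satisfy $\psi(r)\le\psi'(r+c)$ and $\psi'(r)\le\psi(r+c)$ for $r\gg 0$, and two polynomials related in this way must have the same degree and the same leading coefficient; therefore $d(M)=\deg\psi_{M}$ and $e(M)=d!\,a_{d}$ depend only on $M$. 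I expect the induction establishing injectivity of $\rho_{k}$ — and hence Bernstein's inequality — to be the only genuinely delicate step; the remaining ingredients are the standard Hilbert-function calculus.
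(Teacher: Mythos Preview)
Your argument is correct and is the standard modern proof: pass to the associated graded ring $\operatorname{gr}A_{n}\cong K[y_{1},\dots,y_{2n}]$, invoke Hilbert--Serre for polynomiality and the upper bound $\deg\psi_{M}\le 2n$, establish Bernstein's inequality via the injectivity of $W_{k}\to\Hom_{K}(M_{k},M_{2k})$ (Joseph's trick), and deduce the invariance of $d(M)$ and $e(M)$ from the mutual shift-domination of any two filtrations coming from finite generating sets. The details you give are sound; in particular the induction on $k$ for injectivity is carried out correctly, including the reduction to the center of $A_{n}(K)$.

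There is, however, nothing in the paper to compare against: the paper does not prove this theorem at all but merely quotes it as a classical result, citing Bernstein's original paper and Bj\"ork's book. What is worth pointing out is that the very argument you use for the lower bound---Joseph's injectivity argument---is exactly the tool the paper itself employs later, in the proof of its Theorem~4.2, to obtain the multivariate inequalities $\deg_{t_{i}}\phi_{M}\ge n_{i}$ (and the paper explicitly credits Joseph there). So while the paper offers no proof of the present univariate statement, your approach to the crucial lower bound aligns perfectly with the method the paper relies on for its own new results.
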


The polynomial $\psi_{M}(t)$ is called the {\em Bernstein polynomial} of the $A_{n}(K)$-module $M$ associated with the given system of generators.
The family of all finitely generated left $A_{n}(K)$-modules $M$ with $d(M) = n$ is denoted by $\mathcal{B}_{n}$ and is called the {\em Bernstein class\/} of $A_{n}(K)$-modules. The elements of $\mathcal{B}_{n}$ are also called {\em holonomic $D$-modules}. The following statement (see \cite[Chapter 1, Propositions 5.2, 5.3 and Theorem 5.4]{Bj}) gives some properties of such modules.

\begin{thm}
{\em (i)}\, If $0\rightarrow M_{1}\rightarrow M_{2}\rightarrow M_{3}\rightarrow 0$ is an exact sequence of left $A_{n}(K)$-modules, then $M_{2}\in \mathcal{B}_{n}$ if and only if $M_{1}\in \mathcal{B}_{n}$ and
$M_{3}\in \mathcal{B}_{n}$.

\smallskip

{\em (ii)}\, If $M\in \mathcal{B}_{n}$, then $M$ has a finite length as a left $A_{n}(K)$-module. In fact, every strictly increasing sequence of $A_{n}(K)$-modules contains at most $e(M)$ terms.

\smallskip

{\em (iii)}\, If $M$ is any filtered $A_{n}(K)$-module with an ascending filtration $(M_{r})_{r\in {\bf Z}}$ and there exist positive integers $a$ and $b$ such that $\dim_{K}M_{r}\leq ar^{n} + b(r+1)^{n-1}$ for all
$r\in\mathbb{N}$, then $M\in \mathcal{B}_{n}$ and $e(M)\leq n!a$.
\end{thm}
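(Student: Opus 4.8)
The target is Theorem 2.2, the three standard properties of holonomic $D$-modules. The plan is to prove (i), (ii), (iii) in that order, leaning on the additivity of the Bernstein polynomial across exact sequences and on part (iii) of Theorem 2.1 as the workhorse for the bound-to-holonomicity direction.

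For part (i), I would fix a good filtration on $M_{2}$ coming from a finite generating set, restrict it to $M_{1}$ and push it forward to $M_{3}$; this makes $0\to M_{1,r}\to M_{2,r}\to M_{3,r}\to 0$ exact sequences of finite-dimensional $K$-vector spaces, so $\dim_{K}M_{2,r}=\dim_{K}M_{1,r}+\dim_{K}M_{3,r}$ for all large $r$, hence $\psi_{M_{2}}=\psi_{M_{1}}+\psi_{M_{3}}$ as polynomials (one must first note that the induced filtrations on $M_{1}$ and $M_{3}$ are again good, i.e. finitely generated, which follows since $A_{n}(K)$ is left Noetherian so $M_{1}$ is finitely generated and its induced filtration is equivalent to any good one). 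Since all three polynomials have nonnegative leading coefficients and degrees at least $n$ by Theorem 2.1(ii), the degree of $\psi_{M_{2}}$ is the maximum of the degrees of $\psi_{M_{1}}$ and $\psi_{M_{3}}$; therefore $d(M_{2})=n$ forces both $d(M_{1})=n$ and $d(M_{3})=n$, and conversely. The multiplicity statement $e(M_{2})=e(M_{1})+e(M_{3})$ (used in (ii)) drops out of the same additivity once the degrees agree.

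For part (ii), suppose $M\in\mathcal{B}_{n}$ and let $0=N_{0}\subsetneq N_{1}\subsetneq\dots\subsetneq N_{k}=M$ be a strictly increasing chain of submodules. Each quotient $N_{j}/N_{j-1}$ is a nonzero finitely generated $A_{n}(K)$-module, and by part (i) applied inductively it lies in $\mathcal{B}_{n}$; being nonzero and holonomic, its Bernstein polynomial has degree exactly $n$ and positive leading term, so $e(N_{j}/N_{j-1})\geq 1$. Additivity of $e$ along the chain gives $e(M)=\sum_{j=1}^{k}e(N_{j}/N_{j-1})\geq k$, so any such chain has at most $e(M)$ terms; in particular $M$ has finite length. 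The one point needing a line of justification is that a nonzero holonomic module has multiplicity at least $1$ — this is immediate because $e(M)=n!a_{n}$ with $a_{n}>0$ the leading coefficient of $\psi_{M}$, and $n!a_{n}$ is a positive integer by Theorem 2.1(iii).

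For part (iii), the hypothesis is a filtration $(M_{r})$ (not necessarily good) with $\dim_{K}M_{r}\leq ar^{n}+b(r+1)^{n-1}$. Here I would \emph{not} assume $M$ is finitely generated a priori but deduce it: pick any finite set of generators of $M_{r_{1}}$ for some $r_{1}$ large enough that the submodule they generate, call it $M'$, is ``large,'' and more robustly, observe that $M=\bigcup_{r}M_{r}$ and each $M_{r}$ is finite-dimensional, so it suffices to show the submodule generated by any finite subset is holonomic with controlled multiplicity and invoke the length bound — but cleanly, the standard argument is: for a finitely generated submodule $N$ with good filtration $N_{r}=\sum W_{r}g_{i}$, after shifting indices one has $N_{r}\subseteq M_{r+c}$ for some constant $c$, hence $\dim_{K}N_{r}\le a(r+c)^{n}+b(r+c+1)^{n-1}=ar^{n}+O(r^{n-1})$. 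Comparing with $\psi_{N}(r)$ for large $r$ and using $\deg\psi_{N}\ge n$ from Theorem 2.1(ii), we get $\deg\psi_{N}=n$ and leading coefficient $\le a$, so $N\in\mathcal{B}_{n}$ and $e(N)\le n!a$. Then part (ii) says every chain of submodules of $M$ has length $\le n!a$ (the bound $e(N)\le n!a$ being uniform over all finitely generated $N$, and $M$ is the union of such), so $M$ itself is finitely generated, hence $M\in\mathcal{B}_{n}$ and $e(M)\le n!a$. The main obstacle — and the only genuinely delicate point in the whole theorem — is this last maneuver: bootstrapping from a uniform bound on submodule multiplicities to finite generation and holonomicity of $M$ itself, which requires care that the ``$c$'' shift and the comparison of a possibly-bad filtration with good filtrations on submodules are handled uniformly.
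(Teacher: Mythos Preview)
The paper does not prove Theorem 2.2; it simply states the result and cites \cite[Chapter 1, Propositions 5.2, 5.3 and Theorem 5.4]{Bj} for the proof. Your proposal is the standard argument (essentially the one in Bj\"ork), and it is correct in outline: additivity of $\psi$ across short exact sequences with compatible good filtrations gives (i); additivity of $e$ together with $e\geq 1$ for nonzero holonomic modules gives the chain bound in (ii); and for (iii) the uniform estimate $e(N)\leq n!a$ on finitely generated submodules, combined with (ii), bounds all chain lengths and forces $M$ itself to be finitely generated and holonomic.

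Two small points worth tightening. In (i), you should state explicitly why the filtration on $M_{1}$ induced from a good filtration on $M_{2}$ is again good: this is the place where Noetherianity of the associated graded ring $\mathrm{gr}\,A_{n}(K)$ (not just of $A_{n}(K)$) is used, and without it Theorem 2.1 does not directly apply to the induced filtration. In (iii), the passage from ``every finitely generated submodule has $e\leq n!a$'' to ``every chain in $M$ has length $\leq n!a$'' deserves one more sentence: given a strict chain $N_{0}\subsetneq\dots\subsetneq N_{k}$ in $M$, pick $x_{i}\in N_{i}\setminus N_{i-1}$ and set $N'_{i}=\sum_{j\leq i}A_{n}(K)x_{j}$; then $N'_{0}\subsetneq\dots\subsetneq N'_{k}$ is a strict chain inside the finitely generated holonomic module $N'_{k}$, so $k\leq e(N'_{k})\leq n!a$. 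With these clarifications your argument is complete and matches the cited source.
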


\smallskip

\begin{center}

MULTIVARIATE NUMERICAL POLYNOMIALS OF SUBSETS OF $\mathbb{N}^{n}$

\end{center}

\smallskip

\medskip

\begin{defn}
A polynomial $f(t_{1}, \dots,t_{p})$ in $p$ variables ($p\geq 1$) with rational coefficients is said to be {\em numerical} if $f(t_{1},\dots, t_{p})\in\mathbb{Z}$ for all sufficiently large $t_{1}, \dots, t_{p}\in\mathbb{Z}$, that is, there exists $(s_{1},\dots, s_{p})\in\mathbb{Z}^{p}$ such that $f(r_{1},\dots, r_{p})\in\mathbb{Z}$ whenever $(r_{1},\dots, r_{p})\in\mathbb{Z}^{p}$ and $r_{i}\geq s_{i}$ ($1\leq i\leq p$).
\end{defn}
Clearly, every polynomial with integer coefficients is numerical.  As an example of a numerical polynomial in $p$ variables with non-integer coefficients one can consider $\prod_{i=1}^{p}{t_{i}\choose m_{i}}$ \, ($m_{1},\dots,
m_{p}\in\mathbb{Z}$), where ${t\choose k} =\frac{t(t-1)\dots (t-k+1)}{k!}$ for any $k\in\mathbb{Z}, k\geq 1$, ${t\choose0} = 1$, and ${t\choose k} = 0$ if $k$ is a negative integer.

If $f$ is a numerical polynomial in $p$ variables ($p > 1$), then $\deg f$ and $\deg_{t_{i}}f$ ($1\leq i\leq p$) will denote the total degree of $f$ and the degree of $f$ relative to the variable
$t_{i}$, respectively. The following theorem gives the ''canonical'' representation of a numerical polynomial in several variables.

\begin{thm}
Let $f(t_{1},\dots, t_{p})$ be a numerical polynomial in $p$ variables $t_{1},\dots,  t_{p}$, and let $\deg_{t_{i}}\, f = n_{i}$ ($1\leq i\leq p$). Then the polynomial $f(t_{1},\dots, t_{p})$ can
be represented as
\begin{equation}
f(t_{1},\dots t_{p}) =\D\sum_{i_{1}=0}^{n_{1}}\dots\D\sum_{i_{p}=0}^{n_{p}} {a_{i_{1}\dots i_{p}}}{t_{1}+i_{1}\choose i_{1}}\dots{t_{p}+i_{p} \choose i_{p}}
\end{equation}
\noindent with integer coefficients $a_{i_{1}\dots i_{p}}$ that are uniquely defined by the numerical polynomial.
\end{thm}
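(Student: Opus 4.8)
\emph{Sketch of proof.} The plan is to induct on the number of variables $p$, using the classical one-variable theory as the base case. First I would settle existence and uniqueness of the representation over $\mathbb{Q}$. Since $\binom{t_k+i_k}{i_k}=\dfrac{1}{i_k!}t_k^{i_k}+(\text{terms of lower degree in }t_k)$, expanding a product $\binom{t_1+i_1}{i_1}\cdots\binom{t_p+i_p}{i_p}$ shows that, as $(i_1,\dots,i_p)$ ranges over $\{0,\dots,n_1\}\times\cdots\times\{0,\dots,n_p\}$, these products are obtained from the monomials $t_1^{i_1}\cdots t_p^{i_p}$ by a $\mathbb{Q}$-linear change of basis that is triangular with respect to the product order $\leq_{P}$ and has nonzero diagonal entries $\prod_k (i_k!)^{-1}$. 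Hence the products form a $\mathbb{Q}$-basis of the space of polynomials whose degree in each $t_k$ is at most $n_k$; since $f$ lies in this space, the representation (2.1) exists and its coefficients $a_{i_1\dots i_p}\in\mathbb{Q}$ are uniquely determined. Everything after this concerns showing that these coefficients are integers.

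The auxiliary fact I would prove next is that a numerical polynomial in $p$ variables takes integer values at \emph{every} point of $\mathbb{Z}^p$, not only at sufficiently large ones. For $p=1$ this is the classical property of integer-valued polynomials (if $g\in\mathbb{Q}[t]$ is integer-valued for large integers, writing $g=\sum_i c_i\binom{t}{i}$ and iterating finite differences forces every $c_i\in\mathbb{Z}$). For general $p$ it follows by freezing all but one variable at large integer values and applying the one-variable case repeatedly, descending one coordinate at a time.

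For the integrality of the $a_{i_1\dots i_p}$ I would argue as follows. Regard $f$ as a polynomial in $t_p$ over $\mathbb{Q}[t_1,\dots,t_{p-1}]$ and write $f=\sum_{j=0}^{n_p}g_j(t_1,\dots,t_{p-1})\binom{t_p+j}{j}$ with uniquely determined $g_j\in\mathbb{Q}[t_1,\dots,t_{p-1}]$. Specializing $t_p$ to $0,-1,\dots,-n_p$ gives $f(t_1,\dots,t_{p-1},-m)=\sum_{j=0}^{n_p}\binom{j-m}{j}\,g_j(t_1,\dots,t_{p-1})$, i.e. the vector $(f|_{t_p=-m})_m$ equals the integer matrix $B=\big(\binom{j-m}{j}\big)_{0\le m,j\le n_p}$ applied to $(g_j)_j$. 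A short computation (peeling off one column at a time, each being a unit multiple of a standard basis vector) shows $\det B=\pm1$, so $B^{-1}$ is again an integer matrix; therefore each $g_j$ is a $\mathbb{Z}$-linear combination of the polynomials $f(t_1,\dots,t_{p-1},-m)$. By the auxiliary fact each of these is a numerical polynomial in $p-1$ variables, hence so is every $g_j$. The induction hypothesis then expresses each $g_j$ as a $\mathbb{Z}$-linear combination of the products $\binom{t_1+i_1}{i_1}\cdots\binom{t_{p-1}+i_{p-1}}{i_{p-1}}$; substituting back and invoking the uniqueness established above yields $a_{i_1\dots i_p}\in\mathbb{Z}$. (Alternatively, one can isolate $g_j$ by applying the partial difference operator $\Delta_{t_p}$ exactly $j$ times and then specializing $t_p\mapsto -j-1$, which avoids the matrix but requires the same finite-difference bookkeeping.)

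The step I expect to be the main obstacle is exactly this transfer of \emph{integrality} through the reduction: the rational statement is essentially a dimension count, but to keep the coefficients integral one must both upgrade ``numerical'' to ``integer-valued on all of $\mathbb{Z}^p$'' — so that specializing $t_p$ at negative integers again produces numerical polynomials — and check that the specialization matrix $B$ (equivalently, the iterated finite-difference extraction) is unimodular over $\mathbb{Z}$. The remaining verifications are routine.
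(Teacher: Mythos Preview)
The paper does not actually prove this theorem: it is stated in Section~2 as a preliminary result (the canonical representation of a multivariate numerical polynomial) with no accompanying argument, the proof being implicitly deferred to the monograph \cite{KLMP}. So there is no in-paper proof to compare against.

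Your sketch is correct and follows the standard route one finds in that reference: a triangular change-of-basis gives existence and uniqueness over $\mathbb{Q}$, the ``numerical $\Rightarrow$ integer-valued on all of $\mathbb{Z}^p$'' upgrade is handled by freezing variables and using the one-variable theory, and integrality of the $a_{i_1\dots i_p}$ comes by induction on $p$ after peeling off the last variable. Both extraction methods you mention work; for the matrix $B=\bigl(\binom{j-m}{j}\bigr)_{0\le m,j\le n_p}$ one can check directly that the last column is $e_0$ and the remaining $n_p\times n_p$ minor equals $\bigl((-1)^j\binom{m-1}{j}\bigr)$, a signed lower-triangular Pascal matrix with unit diagonal, so $\det B=\pm1$ as you claim. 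The finite-difference alternative is slightly cleaner: with the backward difference $\nabla h(t)=h(t)-h(t-1)$ one has $\nabla\binom{t+j}{j}=\binom{t+j-1}{j-1}$, hence $\nabla^{k}f\big|_{t_p=0}=\sum_{j\ge k}g_j$ and $g_k=\nabla^{k}f(0)-\nabla^{k+1}f(0)$, visibly a $\mathbb{Z}$-combination of the specializations $f(t_1,\dots,t_{p-1},-m)$. Either way your induction closes, and the degree bounds $\deg_{t_i}g_j\le n_i$ needed to invoke the hypothesis are immediate from the $\mathbb{Z}$-linear expression for $g_j$.
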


In the rest of this section we deal with subsets of $\mathbb{N}^{n}$ where the positive integer $n$ is represented as a sum of $p$ nonnegative integers $n_{1},\dots, n_{p}$ ($p \geq 1$).
In other words, we fix a partition $(n_{1},\dots, n_{p})$ of $n$.

If $A\subseteq\mathbb{N}^{n}$, then for any $r_{1},\dots, r_{p}\in\mathbb{N}$, $A(r_{1},\dots, r_{p})$ will denote the subset of $A$ that consists of all $n$-tuples $(a_{1},\dots , a_{n})\in A$ such that
$a_{1}+ \dots +a_{n_{1}}\leq r_{1}$, $a_{n_{1}+1} + \dots + a_{n_{1}+n_{2}}\leq r_{2}, \dots, a_{n_{1}+\dots +n_{p-1}+1} + \dots + a_{n}\leq r_{p}$. Furthermore, we shall associate with the set $A$
a set $V_{A}\subseteq\mathbb{N}^{m}$ that consists of all $n$-tuples $v = (v_{1},\dots , v_{n})\in\mathbb{N}^{n}$ that are not greater than or equal to any $n$-tuple from $A$ with respect to the product
order on $\mathbb{N}^{n}$. Thus, an element $v=(v_{1},\dots , v_{n})\in\mathbb{N}^{n}$ belongs to $V_{A}$ if and only if for any element $(a_{1},\dots , a_{n})\in A$ there exists $i\in\mathbb{N}, 1\leq i\leq n$, such that $a_{i} > v_{i}$.

The following two theorems proved in \cite[Chapter 2]{KLMP} generalize the well-known Kolchin's result on univariate numerical polynomials of subsets of $\mathbb{N}^{n}$ (see \cite[Chapter 0, Lemma 17]{K}) and give an explicit formula for the numerical polynomials in $p$ variables associated with a finite subset of $\mathbb{N}^{n}$.

\begin{thm}
Let $A$ be a subset of $\mathbb{N}^{n}$ where $n = n_{1} + \dots + n_{p}$ for some nonnegative integers  $n_{1},\dots, n_{p}$ ($p\geq 1$). Then there exists a numerical polynomial
$\omega_{A}(t_{1},\dots, t_{p})$ in $p$ variables with the following properties:

{\em (i)}  $\omega_{A}(r_{1},\dots, r_{p}) = \Card\,V_{A}(r_{1},\dots, r_{p})$ for all sufficiently large $(r_{1},\dots, r_{p})\in\mathbb{N}^{p}$ (i. e., there is
$(s_{1},\dots, s_{p})\in\mathbb{N}^{p}$ such that the equality holds for all $(r_{1},\dots, r_{p})\in\mathbb{N}^{p}$ such that $(s_{1},\dots, s_{p})\leq_{P}(r_{1},\dots, r_{p})$).

\smallskip

{\em (ii)} $\deg\omega_{A}\leq n$ and $\deg_{t_{i}}\omega_{A}\leq n_{i}$ for $i = 1,\dots, p$.

\smallskip

{\em (iii)} $\deg\,\omega_{A} = n$ if and only if the set $A$ is empty. In this case \\ $\omega_{A}(t_{1},\dots, t_{p}) = \D\prod_{i=1}^{p}{t_{i}+n_{i}\choose n_{i}}$.

\smallskip

{\em (iv)} $\omega_{A}$ is a zero polynomial if and only if $(0,\dots, 0)\in A$.
\end{thm}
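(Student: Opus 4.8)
The plan is to reduce to a finite set $A$ and then induct on $\Card A$, peeling off one element of $A$ at a time by an inclusion--exclusion identity.

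To begin, note that $V_{A}$ depends only on the set $\widetilde{A}$ of $\leq_{P}$-minimal elements of $A$ (an $n$-tuple fails to dominate every element of $A$ if and only if it fails to dominate every minimal one), that $\widetilde{A}$ is finite by Dickson's lemma, and that $\widetilde{A}=\emptyset$ if and only if $A=\emptyset$, while $(0,\dots,0)\in\widetilde{A}$ if and only if $(0,\dots,0)\in A$. So we may assume $A=\{a^{(1)},\dots,a^{(m)}\}$ is finite and argue by induction on $m$. If $m=0$, then $V_{A}=\mathbb{N}^{n}$ and $\Card V_{A}(r_{1},\dots,r_{p})$ factors over the $p$ blocks of the partition, the $k$-th factor counting the points of $\mathbb{N}^{n_{k}}$ of coordinate sum $\leq r_{k}$; hence $\omega_{A}(t_{1},\dots,t_{p})=\prod_{i=1}^{p}\binom{t_{i}+n_{i}}{n_{i}}$, a numerical polynomial of total degree $n$ with $\deg_{t_{i}}\omega_{A}=n_{i}$. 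This is the base case, and it also gives (ii) here and the ``$\Leftarrow$'' direction of (iii). If $m\geq 1$ and $(0,\dots,0)\in A$, then $V_{A}=\emptyset$, so $\omega_{A}=0$ works; this is the ``$\Leftarrow$'' direction of (iv).

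For the inductive step assume $m\geq 1$ and $(0,\dots,0)\notin A$; pick $a^{(m)}=(c_{1},\dots,c_{n})$, which is then nonzero, and set $A'=A\setminus\{a^{(m)}\}$. Then $V_{A}=V_{A'}\setminus\{v\in\mathbb{N}^{n}:v\geq_{P}a^{(m)}\}$, and writing each $v\geq_{P}a^{(m)}$ as $v=a^{(m)}+w$ with $w\in\mathbb{N}^{n}$ gives a bijection under which $v\in V_{A'}$ corresponds exactly to $w\in V_{B}$, where $B=\{b^{(1)},\dots,b^{(m-1)}\}$ with $b^{(i)}_{j}=\max(0,\,a^{(i)}_{j}-c_{j})$, and under which the $k$-th block sum of $v$ equals $d_{k}$ plus the $k$-th block sum of $w$, where $d_{k}$ is the $k$-th block sum of $a^{(m)}$. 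Hence, for all sufficiently large $(r_{1},\dots,r_{p})$ (so that $r_{k}\geq d_{k}$ for each $k$),
\begin{equation*}
\Card V_{A}(r_{1},\dots,r_{p})=\Card V_{A'}(r_{1},\dots,r_{p})-\Card V_{B}(r_{1}-d_{1},\dots,r_{p}-d_{p}).
\end{equation*}
Since $\Card A'=m-1$ and $\Card B\leq m-1$, the induction hypothesis supplies $\omega_{A'}$ and $\omega_{B}$, and we set $\omega_{A}(t_{1},\dots,t_{p})=\omega_{A'}(t_{1},\dots,t_{p})-\omega_{B}(t_{1}-d_{1},\dots,t_{p}-d_{p})$; this is numerical with the required evaluation property, and since translating the arguments changes neither the total degree nor any partial degree, $\deg\omega_{A}\leq n$ and $\deg_{t_{i}}\omega_{A}\leq n_{i}$, proving (ii).

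It remains to prove the ``$\Rightarrow$'' directions of (iii) and (iv). For (iii) we must show that $A\neq\emptyset$ forces $\deg\omega_{A}\leq n-1$. If $m=1$, then $A'=B=\emptyset$, so $\omega_{A}=\prod_{i=1}^{p}\binom{t_{i}+n_{i}}{n_{i}}-\prod_{i=1}^{p}\binom{t_{i}-d_{i}+n_{i}}{n_{i}}$, and the two products share the same degree-$n$ homogeneous component $\prod_{i=1}^{p}t_{i}^{n_{i}}/n_{i}!$, which therefore cancels. If $m\geq 2$, then $A'\neq\emptyset$ and $B\neq\emptyset$, so $\deg\omega_{A'}\leq n-1$ and $\deg\omega_{B}\leq n-1$ by the induction hypothesis, whence $\deg\omega_{A}\leq n-1$. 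For (iv), if $(0,\dots,0)\notin A$, then $(0,\dots,0)\in V_{A}$, so $\Card V_{A}(r_{1},\dots,r_{p})\geq 1$ for every $(r_{1},\dots,r_{p})\in\mathbb{N}^{p}$, and since $\omega_{A}$ agrees with this cardinality for large arguments it is not the zero polynomial. I expect the main obstacle to be the bookkeeping in the inductive step — correctly identifying $B$ and the shifts $d_{k}$ so that the inclusion--exclusion identity is exact — together with the top-degree cancellation that underlies (iii).
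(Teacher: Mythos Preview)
Your proof is correct. The reduction to finite $A$ via Dickson's lemma, the induction on $\Card A$, the identification of the shifted set $B$ with $b^{(i)}_{j}=\max(0,a^{(i)}_{j}-c_{j})$, and the resulting identity
\[
\Card V_{A}(r_{1},\dots,r_{p})=\Card V_{A'}(r_{1},\dots,r_{p})-\Card V_{B}(r_{1}-d_{1},\dots,r_{p}-d_{p})
\]
are all correct, as is the top-degree cancellation for (iii) and the argument for (iv).

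As for the comparison: the paper does not actually supply a proof of this theorem; it cites \cite[Chapter~2]{KLMP}. What the paper does record is the closed formula of Theorem~2.7, the full inclusion--exclusion sum over all subsets $\sigma$ of $\{1,\dots,m\}$, which is precisely what your one-element-at-a-time recursion unwinds to. So your argument is essentially the inductive derivation of that formula, and it yields the degree bounds (ii)--(iv) along the way rather than reading them off the closed form after the fact. Either route works; yours has the advantage of making (iii) transparent (the degree-$n$ parts cancel at the very first step and never reappear), while the closed formula is more useful for explicit computation, which is the paper's purpose in stating Theorem~2.7.
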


\begin{defn}
The polynomial $\omega_{A}(t_{1},\dots, t_{p})$ is called the dimension polynomial of the set $A\subseteq\mathbb{N}^{n}$ associated with the partition $(n_{1},\dots, n_{p})$ of $n$.
\end{defn}

\begin{thm}
Let $A = \{a_{1}, \dots, a_{m}\}$ be a finite subset of $\mathbb{N}^{n}$ where $m$ is a positive integer and $n = n_{1} + \dots + n_{p}$ for some nonnegative integers  $n_{1},\dots, n_{p}$ ($p\geq 1$). Let
$a_{i} = (a_{i1}, \dots, a_{in})$ \, ($1\leq i\leq m$) and for any $l\in\mathbb{N}$, $0\leq l\leq m$, let $\Gamma(l,m)$ denote the set of all $l$-element subsets of the set $\mathbb{N}_{m} = \{1,\dots, m\}$. Furthermore, for any $\sigma \in\Gamma (l,m)$, let $\bar{a}_{\sigma j} = \max \{a_{ij} | i\in\sigma\}$ ($1\leq j\leq n$) and $b_{\sigma j} =\D\sum_{h\in\sigma_{j}}\bar{a}_{\sigma h}$. Then
\begin{equation}
\omega_{A}(t_{1},\dots, t_{p}) = \D\sum_{l=0}^{m}(-1)^{l}\D\sum_{\sigma \in \Gamma (l,m)}\D\prod_{j=1}^{p}{t_{j}+n_{j} - b_{\sigma j}\choose n_{j}}
\end{equation}
\end{thm}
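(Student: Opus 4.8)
The plan is to compute $\Card V_{A}(r_{1},\dots,r_{p})$ for large arguments by inclusion--exclusion and then to identify the resulting polynomial with $\omega_{A}$ via the existence statement of Theorem 2.5. First I would pass to complements. For $1\leq i\leq m$ let $U_{i}=a_{i}+\mathbb{N}^{n}=\{v\in\mathbb{N}^{n}\mid v_{k}\geq a_{ik}\text{ for }1\leq k\leq n\}$, the orthant with vertex $a_{i}$. By the definition of $V_{A}$, an element $v\in\mathbb{N}^{n}$ fails to lie in $V_{A}$ exactly when $v\geq_{P}a_{i}$ for some $i$; hence $\mathbb{N}^{n}\setminus V_{A}=\bigcup_{i=1}^{m}U_{i}$.

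Fix $(r_{1},\dots,r_{p})\in\mathbb{N}^{p}$ and write $B=\mathbb{N}^{n}(r_{1},\dots,r_{p})$, a finite set. Intersecting the previous identity with $B$ and applying inclusion--exclusion inside $B$ gives
\[
\Card V_{A}(r_{1},\dots,r_{p})=\sum_{l=0}^{m}(-1)^{l}\sum_{\sigma\in\Gamma(l,m)}\Card\Bigl(B\cap\bigcap_{i\in\sigma}U_{i}\Bigr),
\]
the term $l=0$ being $\Card B$ (empty intersection $=\mathbb{N}^{n}$). Now for a fixed $\sigma$ one has $\bigcap_{i\in\sigma}U_{i}=\{v\in\mathbb{N}^{n}\mid v_{k}\geq\bar{a}_{\sigma k}\text{ for }1\leq k\leq n\}=\bar{a}_{\sigma}+\mathbb{N}^{n}$, where $\bar{a}_{\sigma}=(\bar{a}_{\sigma 1},\dots,\bar{a}_{\sigma n})$ and $\bar{a}_{\emptyset}=0$. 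Substituting $w=v-\bar{a}_{\sigma}$ turns membership of $v$ in $B$ into the conditions $\sum_{k\in N_{j}}w_{k}\leq r_{j}-b_{\sigma j}$ ($j=1,\dots,p$), where $N_{j}=\{n_{1}+\dots+n_{j-1}+1,\dots,n_{1}+\dots+n_{j}\}$ is the $j$-th block of coordinate indices. If $r_{j}\geq b_{\sigma j}$ for all $j$, a stars-and-bars count yields $\Card(B\cap\bigcap_{i\in\sigma}U_{i})=\prod_{j=1}^{p}\binom{r_{j}-b_{\sigma j}+n_{j}}{n_{j}}$; if some $r_{j}<b_{\sigma j}$ this set is empty and the binomial product ceases to compute it.

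Because $\Gamma(l,m)$ is finite, once all $r_{j}\geq\max_{\sigma}b_{\sigma j}$ the last identity holds for every $\sigma$ simultaneously, so for all sufficiently large $(r_{1},\dots,r_{p})\in\mathbb{N}^{p}$ the two displays combine to give $\Card V_{A}(r_{1},\dots,r_{p})$ equal to the value at $(r_{1},\dots,r_{p})$ of the right-hand side of (2.2). That right-hand side is a polynomial in $t_{1},\dots,t_{p}$ (each $\binom{t_{j}+n_{j}-b_{\sigma j}}{n_{j}}$ is a polynomial in $t_{j}$ of degree $n_{j}$), and it agrees with $\omega_{A}(r_{1},\dots,r_{p})$ for all large integer tuples by Theorem 2.5; since a polynomial in $p$ variables vanishing on a shifted orthant of $\mathbb{Z}^{p}$ is identically zero, (2.2) follows. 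One can also note that the $l=0$ term of (2.2) is $\prod_{j=1}^{p}\binom{t_{j}+n_{j}}{n_{j}}$ (as $b_{\emptyset j}=0$), matching part (iii) of Theorem 2.5.

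The routine ingredient is the stars-and-bars evaluation of $\Card(B\cap\bigcap_{i\in\sigma}U_{i})$. The main obstacle is the bookkeeping: the combinatorial identity for a single $\sigma$ is valid only in the range $r_{j}\geq b_{\sigma j}$, so one must make the threshold ``sufficiently large'' uniform over the finitely many subsets $\sigma$ and then legitimately pass from equality of integer values on a shifted orthant to equality of polynomials, which is the content of the last step above.
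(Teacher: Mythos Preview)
Your argument is correct: the complement of $V_{A}$ in $\mathbb{N}^{n}$ is exactly $\bigcup_{i=1}^{m}(a_{i}+\mathbb{N}^{n})$, the intersection over $\sigma$ is the shifted orthant with vertex $(\bar a_{\sigma 1},\dots,\bar a_{\sigma n})$, and the block-wise stars-and-bars count together with inclusion--exclusion gives the stated formula for all $(r_{1},\dots,r_{p})$ dominating the finitely many thresholds $b_{\sigma j}$; the passage from equality on a shifted orthant of integer points to equality of polynomials is standard and you handle it cleanly.

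As for comparison: the paper does not actually prove this theorem here---it is quoted from \cite[Chapter~2]{KLMP} without argument---so there is no in-paper proof to set yours against. Your inclusion--exclusion approach is precisely the natural (and standard) one for such Kolchin-type dimension polynomial formulas, and is essentially the argument one finds in that reference. One minor remark: the paper's notation $b_{\sigma j}=\sum_{h\in\sigma_{j}}\bar a_{\sigma h}$ is slightly ambiguous (the symbol $\sigma_{j}$ is never defined), but your interpretation $b_{\sigma j}=\sum_{h\in N_{j}}\bar a_{\sigma h}$ is the intended one and is consistent with how the formula is used later in the paper.
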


\begin{rem}  Clearly, if $A\subseteq\mathbb{N}^{n}$ and $A'$ is the set of all minimal elements of the set $A$ with respect to the product order on  $\mathbb{N}^{n}$, then the set $A'$ is finite and
$\omega_{A}(t_{1}, \dots, t_{p}) = \omega_{A'}(t_{1}, \dots, t_{p})$. Thus, Theorem 2.7 gives an algorithm that allows one to find the dimension polynomial of any subset of $\mathbb{N}^{n}$ (with a given representation of $n$ as a sum of $p$ positive integers): one should first find the set of all minimal points of the subset and then apply Theorem 2.7.
\end{rem}

\section{Reduction with respect to several term orderings and generalized Gr\"obner bases in free $D$-modules}

In what follows, we keep the notation and conventions of the previous section. (In particular, $A_{n}(K)$ denotes a Weyl algebra in $n$ variables $x_{1},\dots, x_{n}$ over a field $K$ of characteristic zero, and the corresponding partial derivations are denoted by $\partial_{1},\dots, \partial_{n}$, respectively.) Let us fix a partition of the  set $X = \{x_{1},\dots, x_{n}\}$ into $p$ disjoint subsets and the corresponding partition of the set $\Delta = \{\partial_{1},\dots, \partial_{n}\}$:
\begin{equation}
X = X_{1}\bigcup\dots\bigcup X_{p}\,\,\,\,\, \text{and}\,\,\,\,\, \Delta = \Delta_{1}\bigcup \dots \bigcup \Delta_{p}
\end{equation}
where $X_{1} = \{x_{1},\dots, x_{n_{1}}\},\dots, X_{p} = \{x_{n_{1}+\dots+n_{p-1}+1},\dots, x_{n}\}$ and \\ $\Delta_{1} = \{\partial_{1},\dots, \partial_{n_{1}}\},\dots,
\Delta_{p} = \{\partial_{n_{1}+\dots+n_{p-1}+1},\dots, \partial_{n}\}$  ($n_{1}+\dots + n_{p} = n$).

\noindent Furthermore, $\Theta$ will denote the set of all power products $\theta=x_{1}^{\alpha_{1}}\dots x_{n}^{\alpha_{n}}\partial_{1}^{\beta_{1}}\dots \partial_{n}^{\beta_{n}}$ with nonnegative integer exponents; such a power product will be called a {\em monomial} and also denoted by $x^{\alpha}\partial^{\beta}$  (we use the multi-index notation introduced in the previous section).

Considering a partition $\mathbb{N}_{n}= N_{1}\bigcup\dots\bigcup N_{p}$ of the set $\mathbb{N}_{n} = \{1,\dots, n\}$ that corresponds to partition (3.1)\, (that is, $N_{1}= \{1,\dots, n_{1}\},\dots, N_{p}=\{n_{1}+\dots+n_{p-1}+1,\dots, n\}$) and a  monomial $\theta=x^{\alpha}\partial^{\beta} = x_{1}^{\alpha_{1}}\dots x_{n}^{\alpha_{n}}\partial_{1}^{\beta_{1}}\dots \partial_{n}^{\beta_{n}}$, we set
$\theta_{xj} = \prod_{k\in N_{j}}x_{k}^{\alpha_{k}}$, $\theta_{\partial j} = \prod_{k\in N_{j}}\partial_{k}^{\beta_{ k}}$, $\theta_{x} = x^{\alpha} = x_{1}^{\alpha_{1}}\dots x_{n}^{\alpha_{n}}$, and
$\theta_{\partial} = \partial^{\beta} = \partial_{1}^{\beta_{1}}\dots \partial_{n}^{\beta_{n}}$. Furthermore, we set
$|\theta|_{x} = \sum_{i=1}^{n}\alpha_{i}$, $|\theta|_{\partial} = \sum_{i=1}^{n}\beta_{i}$, $|\theta|_{xj} = \sum_{i\in N_{j}}\alpha_{i}$, and $|\theta|_{\partial j} = \sum_{i\in N_{j}}\beta_{i}$
($1\leq j\leq p$). With this notation, the numbers $\ord\theta = |\theta|_{x} + |\theta|_{\partial}$ and $\ord_{j}\theta = |\theta|_{xj} + |\theta|_{\partial j}$ will be called the {\em order of $\theta$} and the {\em $j$th order of $\theta$} ($1\leq j\leq p$), respectively. If $r_{1},\dots, r_{p}\in\mathbb{N}$, the set $\{\theta \in \Theta\,|\,\ord_{j}\theta \leq r_{j}$ for $j=1,\dots, p\}$ will be denoted by $\Theta(r_{1},\dots, r_{p})$.

It is easy to see that the sets $\Theta_{x} = \{\theta_{x}\,|\theta\in\Theta\}$ and $\Theta_{\partial} = \{\theta_{\partial}\,|\theta\in\Theta\}$ are commutative multiplicative semigroups (of course, $\Theta$ is not: $\partial_{i}x_{i}\neq x_{i}\partial_{i}$ for $i=1,\dots, n$).

If $D = \sum_{\theta\in\Theta}a_{\theta}\theta\in A_{n}(K)\setminus\{0\}$ ($a_{\theta}\in K$ for any $\theta\in\Theta$ and the sum is finite), then the order $\ord D$ and the $j$th order $\ord_{j}D$ ($1\leq j\leq p$) of $D$ are defined, as follows: $\ord D = \max\{\ord\theta\,|\, a_{\theta}\neq 0\}$ and $\ord_{j}D = \max\{\ord_{j}\theta\,|\, a_{\theta}\neq 0\}$.
These notions allow one to consider the Weyl algebra $A_{n}(K)$ as a filtered ring with a $p$-dimensional filtration $\{W_{r_{1},\dots, r_{p}}\,|\,(r_{1},\dots, r_{p})\in\mathbb{Z}^{p}\}$
where $\{W_{r_{1},\dots, r_{p}} = \{D\in A_{n}(K)\,|\,\ord_{j}D\leq r_{j}$ for $j=1,\dots, p\}$  for any $(r_{1},\dots, r_{p})\in\mathbb{N}^{p}$ and $W_{r_{1},\dots, r_{p}} = 0$ for all
$(r_{1},\dots, r_{p})\in\mathbb{Z}^{p}\setminus\mathbb{N}^{p}$. Clearly, $\bigcup{\{W_{r_{1},\dots, r_{p}}\,|\,r_{1},\dots, r_{p})\in\mathbb{Z}^{p}\}}=A_{n}(K)$, $W_{r_{1}, \dots, r_{p}}\subseteq W_{s_{1},\dots, s_{p}}$ if $(r_{1}, \dots, r_{p})\leq_{P}(s_{1}, \dots, s_{p})$ ($\leq_{P}$ is the product order on $\mathbb{Z}^{p}$) and if $(i_{1}, \dots, i_{p}), (j_{1}, \dots, j_{p})\in\mathbb{N}^{p}$, then
$W_{i_{1}, \dots, i_{p}}W_{j_{1},\dots, j_{p}} = W_{i_{1}+j_{1}, \dots, i_{p}+j_{p}}$.

\begin{defn}
If $M$ is a $D$-module (that is, a left $A_{n}(K)$-module), then a family \\$\{M_{r_{1}, \dots, r_{p}}\,\,(r_{1},\dots, r_{p})\in\mathbb{Z}^{p}\}$ of $K$-vector subspaces of $M$ is said to be a $p$-dimensional
filtration of $M$ if the following four conditions hold.

\smallskip

(i) $M_{r_{1}, \dots, r_{p}}\subseteq M_{s_{1}, \dots, s_{p}}$ for any $p$-tuples $(r_{1}, \dots, r_{p}), \,(s_{1}, \dots, s_{p})\in\mathbb{Z}^{p}$ such that \\$(r_{1}, \dots, r_{p})\leq_{P}(s_{1}, \dots, s_{p})$.

\smallskip

(ii) $\D\bigcup_{(r_{1}, \dots, r_{p})\in\mathbb{Z}^{p}}M_{r_{1}, \dots, r_{p}} = M$.

\smallskip

(iii) There exists a $p$-tuple $(r^{(0)}_{1},\dots, r^{(0)}_{p})\in\mathbb{Z}^{p}$ such that $M_{r_{1}, \dots, r_{p}} = 0$ if $r_{i} < r^{(0)}_{i}$ for at least one index $i$ ($1\leq i\leq p$).

\smallskip

(iv) $W_{r_{1}, \dots, r_{p}}M_{s_{1}, \dots, s_{p}}\subseteq M_{r_{1}+s_{1}, \dots, r_{p}+s_{p}}$ for all $(r_{1}, \dots, r_{p}), (s_{1}, \dots, s_{p})\in\mathbb{Z}^{p}$.

If every vector $K$-space $M_{r_{1}, \dots, r_{p}}$ is finite-dimensional and there exists an element $(h_{1}, \dots, h_{p})$ in $\mathbb{Z}^{p}$ such that $W_{r_{1}, \dots, r_{p}}M_{h_{1}, \dots, h_{p}} = M_{r_{1}+h_{1}, \dots, r_{p}+h_{p}}$ for any $(r_{1}, \dots, r_{p})\in\mathbb{N}^{p}$, the $p$-dimensional filtration is called excellent.
\end{defn}

Clearly, if $z_{1},\dots, z_{k}$ is a finite system of generators of a $D$-module $M$ over $A_{n}(K)$, then $\{\D\sum_{i=1}^{k}W_{r_{1}, \dots, r_{p}}z_{i}\,|\,(r_{1},\dots, r_{p})\in\mathbb{Z}^{p}\}$  is an excellent $p$-dimensional filtration of $M$.

\smallskip

We shall consider $p$ orderings $<_{i}$ ($1\leq i\leq p$) of the set of all monomials $\Theta$ defined as follows: if $\theta = x^{\alpha}\partial^{\beta} = x_{1}^{\alpha_{1}}\dots x_{n}^{\alpha_{n}}\partial_{1}^{\beta_{1}}\dots \partial_{n}^{\beta_{n}}$ and $\theta' = x^{\gamma}\partial^{\delta} = x_{1}^{\gamma_{1}}\dots x_{n}^{\gamma_{n}}\partial_{1}^{\delta_{1}}\dots \partial_{n}^{\delta_{n}}$ are two elements of $\Theta$, then $\theta <_{i} \theta'$ if and only if the vector

\noindent$(\ord_{i}\theta, \ord_{1}\theta,\dots, \ord_{i-1}\theta, \ord_{i+1}\theta, \dots, \ord_{p}\theta, \alpha_{n_{1}+\dots + n_{i-1}+1}, \dots, \alpha_{n_{1}+\dots +n_{i}},\\
\beta_{n_{1}+\dots + n_{i-1}+1}, \dots, \beta_{n_{1}+\dots +n_{i}}, \alpha_{1},\dots, \beta_{1},\dots, \alpha_{n_{1}+\dots + n_{i-1}}, \dots, \beta_{n_{1}+\dots + n_{i-1}},\\
\alpha_{n_{1}+\dots + n_{i+1}}, \dots, \beta_{n_{1}+\dots + n_{i+1}}, \dots, \alpha_{n}, \beta_{n})$ is less than the corresponding vector for $\theta'$ with respect to the lexicographic order on $\mathbb{Z}^{2n+p}$
(and with the obvious adjustment for $i=1$). Clearly, $\Theta$ is well-ordered with respect to each of the orders $<_{1}, \dots, <_{p}$.

Let $\theta = \theta_{x}\theta_{\partial}, \theta' = \theta'_{x}\theta'_{\partial}\in\Theta$.  We say that $\theta$ divides $\theta'$ and write $\theta|\theta'$ if $\theta_{x}|\theta'_{x}$ and $\theta_{\partial}|\theta'_{\partial}$ in the commutative semigroups $\Theta_{x}$ and $\Theta_{\partial}$, respectively. In this case we also say that $\theta'$ is a {\em multiple} of $\theta$.

It is easy to see that if $\theta\,|\,\theta'$, then there exist elements $\theta_{0}, \theta_{1},\dots, \theta_{k}\in \Theta$ such that $\theta' = \theta_{0}\theta - \sum_{i=1}^{k}\theta_{i}$ where
$\ord_{j}\theta_{0} + \ord_{j}\theta = \ord_{j}\theta'$ (and also $|\theta_{0}|_{xj} + |\theta|_{xj} = |\theta'|_{xj}$ and $|\theta_{0}|_{\partial j} + |\theta|_{\partial j} = |\theta'|_{\partial j}$)
for $j=1,\dots, p$, while for any $i=1,\dots, k$, one has $\ord\theta_{i} < \ord\theta'$, $\ord_{j}\theta_{i}\leq\ord_{j}\theta'$ ($1\leq j\leq p$), and $\ord_{j_{0}}\theta_{i} < \ord_{j_{0}}\theta'$ for some
$j_{0}\in\{1,\dots, p\}$. (It follows that $\theta_{i} <_{j}\theta'$ for $j=1,\dots, p$.)
In this case we denote the monomial $\theta_{0}$ by ${\D\frac{\theta'}{\theta}}$.

For example ($n=2$), $\theta = x_{1}x_{2}\partial_{2}$ divides $\theta' = x_{1}^{2}x_{2}\partial_{1}\partial_{2}^{2}$  and
one can write $\theta' = \theta_{0}\theta - (\theta_{1}+\theta_{2}+\theta_{3})$ where $\theta_{0} = x_{1}\partial_{1}\partial_{2}$, $\theta_{1} = x_{1}^{2}\partial_{1}\partial_{2}$, $\theta_{2}= x_{1}x_{2}\partial_{2}^{2}$, and $\theta_{3} = x_{1}\partial_{2}$.

In what follows, by the {\em least common multiple} of elements $\theta', \theta''\in \Theta$ we mean the element $\lcm(\theta', \theta'') = \lcm(\theta_{x}', \theta_{x}'')\lcm(\theta_{\partial}', \theta_{\partial}'')$. Clearly, if $\theta = \lcm(\theta', \theta'')$, then $\theta'|\theta$,\, $\theta''|\theta$, and whenever $\theta'|\tau$ and $\theta''|\tau$ for some $\tau\in \Theta$, one has $\theta|\tau$.

Let $E$ be a finitely generated free $A_{n}(K)$-module with set of free generators $e=\{e_{1},\dots, e_{m}\}$. Obviously, $E$ can be considered as a $K$-vector space with the basis $\Theta e = \{\theta e_{i} | \theta \in \Theta, 1\leq i\leq m\}$ whose elements will be called {\em terms}. For any term $\theta e_{i}$ ($\theta\in \Theta,\, 1\leq i\leq m$) and for any $j\in\{1,\dots, p\}$, we define the $j$th order $\ord_{j}(\theta e_{j})$ of this term as $\ord_{j}\theta $. If $T\subseteq \Theta$, then the set $\{te_{i}\,|\,t\in T,\, 1\leq i\leq m\}$ will be denoted by $Te$. In particular, for any $r_{1}, \dots, r_{p}\in\mathbb{N}^{p}$, $\Theta(r_{1},\dots, r_{p})e$ will denote the set $\{u=\theta e_{i}\,|\,\ord_{j}u\leq r_{j}$ for $j=1,\dots, p\}$.

A term $u = \theta'e_{i}$ is said to be a {\em multiple} of a term $v = \theta e_{j}$ ($\theta, \theta'\in\Theta,\, 1\leq i, j\leq m$) if $i=j$ and $\theta|\theta'$. In this case we also say that $v$ divides $u$, write $v|u$ and define ${\D\frac{u}{v}}$ as $ {\D\frac{\theta'}{\theta}}.$

The {\em least common multiple} of two terms $w_{1} = \theta_{1}e_{i}$ and $w_{2} = \theta_{2}e_{j}$ is defined as $$\lcm(w_{1}, w_{2}):=
\begin{cases}
\lcm(\theta_1,\theta_2)e_i,&\textnormal{ if }i=j,\\
0,&\textnormal{ if }i\neq j.
\end{cases}$$
We shall consider $p$ orderings of the set $\Theta e$ that correspond to the introduced $p$ orderings of the set $\Theta$ (and are denoted by the same symbols $<_{1},\dots, <_{p}$):
$\theta e_{i} <_{j}\theta' e_{k}$ if and only if $\theta <_{j}\theta'$ or $\theta=\theta'$ and $i < k$.

Clearly, the set of all terms $\Theta e$ is a basis of the $K$-vector space $E$, so every nonzero element $f\in E$ has a unique representation of the form
\begin{equation}
f = a_{1}\theta_{1}e_{i_{1}} + \dots + a_{s}\theta_{s}e_{i_{s}}
\end{equation}
where $\theta_{j}\in\Theta$, $0\neq a_{j}\in K$ ($1\leq j\leq s$) and the terms $\theta_{1}e_{i_{1}},\dots, \theta_{s}e_{i_{s}}$ are all distinct. We say that a term $u$ {\em appears in $f$} (or that $f$ {\em contains} $u$) if $u$ is one of the terms $\theta_{k}e_{i_{k}}$ in the representation (3.2) (that is, the coefficient of $u$ in $f$ is not zero).
\begin{defn}
Let an element $0\neq f\in E$ be written in the form (3.2). Then the greatest term of the set $\{\theta_{1}e_{i_{1}}, \dots, \theta_{s}e_{i_{s}}\}$ with respect to the order $<_{j}$ ($1\leq j\leq p$) is called the $j$th leader of $f$; it is denoted by $u_{f}^{(j)}$.   Furthermore, $\lc_{j}(f)$ will denote the coefficients of $u_{f}^{(j)}$ in (3.2).
\end{defn}
\begin{defn}
Let $f, g\in E$ and let $k, i_{1}, \dots, i_{r}$ be distinct elements of the set $\{1,\dots, p\}$.  Then $f$ is said to be $(<_{k}, <_{i_{1}}, \dots <_{i_{r}})$-{\bf reduced} with respect to $g$ if $f$ does not contain any multiple $\theta u_{g}^{(k)}$  $(\theta \in \Theta)$ such that $\ord_{i_{\nu}}(\theta u_{g}^{(i_{\nu})})\leq \ord_{i_{\nu}}u_{f}^{(i_{\nu})}$ for $\nu = 1,\dots, r$. An element $f\in E$ is said to be
$(<_{k}, <_{i_{1}}, \dots <_{i_{r}})$-reduced with respect to a set $G\subseteq E$, if $f$ is $(<_{k}, <_{i_{1}}, \dots <_{i_{r}})$-reduced with respect to every element of $G$.
\end{defn}

With the above notation, let us consider $p-1$ new symbols $z_{1},\dots, z_{p-1}$ and a set $\Gamma$ of all power products of the form $\gamma = \theta z_{1}^{k_{1}}\dots z_{p-1}^{k_{p-1}}$ with $\theta\in\Theta$ and $l_{1},\dots, l_{p-1}\in\mathbb{N}$. If $\gamma' = \theta'z_{1}^{l_{1}}\dots z_{p-1}^{l_{p-1}}\in\Gamma$, we say that $\gamma$ divides $\gamma'$ and write $\gamma|\gamma'$ if $\theta|\theta'$ (in the sense of division in $\Theta$) and $k_{i}\leq l_{i}$ for $i=1,\dots, p-1$. Furthermore, let $\Gamma e$ denote the set of all elements of the form $\gamma e_{i}$ where $\gamma\in\Gamma$, $1\leq i\leq m$. If $t= \gamma e_{i}, t'=\gamma' e_{k}\in\Gamma e$, we say that $t$ divides $t'$ (or that $t'$ is a multiple of $t$) if and only if $i=k$ and $\gamma|\gamma'$.

For any element $f\in E$, we set $d_{i}(f) = \ord_{i}u_{f}^{(i)} - \ord_{i}u_{f}^{(1)}$ ($2\leq i\leq p$) and consider a mapping $\rho: E\rightarrow \Gamma e$ be defined by
$\rho(f) = z_{1}^{d_{2}(f)}\dots z_{p-1}^{d_{p(f)}}u_{f}^{(1)}$.
\begin{defn}
With the above notation, let $N$ be an $A_{n}(K)$-submodule of $E$. A finite set $G = \{g_{1},\dots, g_{r}\}\subseteq N$ is called a Gr\"obner basis of $N$ with respect to the orders $<_{1},\dots, <_{p}$ if for any $f\in N$, there exists $g_{i}\in G$ such that $\rho(g_{i}) | \rho(f)$.
\end{defn}
\begin{rem} The above condition $\rho(g_{i})\,|\,\rho(f)$ means that $f$ is not $(<_{1},\dots, <_{p})$-reduced with respect to $g_{i}$, since the inequality $d_{j}(g_{i})\leq d_{j}(f)$ for $j=2,\dots, p$ means
that ${\D\frac{u_{f}^{(1)}}{u_{g_{i}}^{(1)}}} = \theta\in\Theta$ such that $\ord_{j}(\theta u_{g_{i}}^{(j)}) = \ord_{j}\theta + \ord_{j}u_{g_{i}}^{(j)}\leq \ord_{j}u_{f}^{(j)}$ ($2\leq j\leq p$). The expression of this property with the use of the divisibility of elements of $\Gamma e$ is convenient because it also shows that the existence of the Gr\"obner basis in the sense of Definition 3.4 immediately follows from the Dickson's lemma. Furthermore, it is clear that a Gr\"obner basis of $N\subseteq E$ with respect to the orders $<_{1},\dots, <_{p}$ is a  Gr\"obner basis of $N$ with respect to $<_{1}$ in the usual sense and therefore generates $N$ as an $A_{n}(K)$-module.
\end{rem}
A set $\{g_{1},\dots, g_{r}\}\subseteq E$ is said to be a {\em Gr\"obner basis with respect to the orders $<_{1},\dots,<_{p}$} if $G$ is a Gr\"obner basis of $N = \sum_{i=1}^{r}A_{n}(K)g_{i}$.
with respect to $<_{1},\dots, <_{p}$.

For any $f, g, h\in E$, with $g\neq 0$, we say that the element $f$ $(<_{k}, <_{i_{1}}, \dots, <_{i_{l}})$-reduces to $h$ modulo $g$ in one step and write
$f\xrightarrow[<_{k}, <_{i_{1}}, \dots, <_{i_{l}}] {\text{g}} h$ if and only if $u_{g}^{(k)} | w$ for some term $w$ in $f$ with a coefficient $a$, \, ${\D\frac{w}{u_{g_{i}}^{(k)}}} = \theta\in\Theta$,
$h = f - a\lc_{k}(g)^{-1}\theta g$\, and $\ord_{i_{\nu}}\theta + \ord_{i_{\nu}}u^{(i_{\nu})}_{g} \leq \ord_{i_{\nu}}u_{f}^{(i_{\nu})}$ ($1\leq \nu \leq l$).

If $f, h\in E$ and $G\subseteq E$, then we say that the element $f$ $(<_{k}, <_{i_{1}}, \dots, <_{i_{l}})$-reduces to $h$ modulo $G$ and write $f\xrightarrow[<_{k}; <_{i_{1}}, \dots, <_{i_{l}}] {\text{$G$}} h$ if and only if there exist two sequences\, $g^{(1)}, g^{(2)},\dots, g^{(q)}\in G$ and  $h_{1},\dots, h_{q-1}\in F$ such
that

\noindent$f\xrightarrow[<_{k}, <_{i_{1}}, \dots, <_{i_{l}}]{\text{$g^{(1)}$}} h_{1}\xrightarrow[<_{k}, <_{i_{1}}, \dots, <_{i_{l}}] {\text{$g^{(2)}$}} \dots \xrightarrow[<_{k}, <_{i_{1}}, \dots, <_{i_{l}}] {\text{$g^{(q-1)}$}}h_{q-1}\xrightarrow[<_{k}, <_{i_{1}}, \dots, <_{i_{l}}] {\text{$g^{(q)}$}}h$.

\smallskip

Let $G = \{g_{1},\dots, g_{r}\}$ be a finite set of elements in the free $A_{n}(K)$-module $E$. Then the following algorithm (whose termination is obvious) shows that for every $f\in E$ there exist elements $g\in E$ and $Q_{1},\dots,Q_{r}\in A_{n}(K)$ such that $f-g=\D\sum_{i=1}^{r}Q_{i}g_{i}$ and $g$ is $(<_{1},\dots, <_{p})$-reduced with respect to $G$.

\begin{alg}

($f, r, g_{1},\dots, g_{r};\, g;\, Q_{1},\dots, Q_{r}$)

\smallskip

{\bf Input:} $f\in E$, {\em a positive integer} $r$, $G = \{g_{1},\dots, g_{r}\}\subseteq E$ {\em where} $g_{i}\ne 0$ {\em for} $i = 1,\dots, r$

{\bf Output:} {\em An element} $g\in E$ {\em and elements} $Q_{1},\dots, Q_{r}\in A_{n}(K)$ {\em such that}
$g = f - (Q_{1}g_{1} + \dots + Q_{r}g_{r})$\, {\em and} $g$ {\em is $(<_{1},\dots, <_{p})$-reduced with respect to} $G$

{\bf Begin}

$Q_{1}:= 0, \dots, Q_{r}:= 0, g:= f$

{\bf While} {\em there exist} $i$, $1\leq i\leq r$, {\em and a term}
$w$, {\em that appears in} $g$ {\em with a nonzero coefficient} $c(w)$, {\em such that}
$u^{(1)}_{g_{i}}|w$ {\em and} $\ord_{j}(\frac{w}{u^{(1)}_{g_{i}}}u^{(j)}_{g_{i}})\leq
\ord_{j}u^{(j)}_{g}$ for $j=2,\dots, p$ {\bf do}

$z$:= {\em the greatest (with respect to} $<_{1}$)
{\em of the terms} $w$ {\em that satisfies the above conditions.}

$k$:= {\em the smallest number} $i$ {\em for which} $u^{(1)}_{g_{i}}$ {\em is the greatest (with
respect to} $<_{1}$) $1${\em -leader of an element} $g_{i} \in G$ {\em such that}
$u^{(1)}_{g_{i}}|z$ {\em and}
$\ord_{j}(\frac{z}{u^{(1)}_{g_{i}}}u_{g_{i}}) \leq
\ord_{j}u^{(j)}_{g}$ for $j=2,\dots, p$.

$Q_{k}:= Q_{k} + c(z)\lc_{1}(g_{k})^{-1}{\frac{z}{u_{g_{k}}^{(1)}}}\,g_{k}$

$g:= g - c(z)\lc_{1}(g_{k})^{-1}{\frac{z}{u_{g_{k}}^{(1)}}}\,g_{k}$

{\bf End}

\end{alg}

Now, for any nonzero elements $f, g\in F$, we can define the {\em $r$th $S$-polynomial of $f$ and $g$} ($1\leq r\leq p$) as the element
$$S_{r}(f, g) = \lc_{r}(f)^{-1}\D\frac{\lcm(u_{f}^{(r)}, u_{g}^{(r)})}{u_{f}^{(r)}}f - \lc_{r}(g)^{-1}\D\frac{\lcm(u_{f}^{(r)}, u_{g}^{(r)})}{u_{g}^{(r)}}\,g.$$

The following two statements can be obtained by mimicking the proof of the corresponding statements in \cite[Section 4]{Levin1} (Proposition 3.7 and Theorem 3.10).

\begin{prop}
Let $G = \{g_{1},\dots, g_{t}\}$ be a Gr\"obner basis of an $A_{n}(K)$-submodule $N$ of $E$ with respect to the orders $<_{1},\dots, <_{p}$. Then

{\em (i)}\, $f\in N$ if and only if $f\xrightarrow[<_{1}, <_{2}, \dots <_{p}]{\text{$G$}} 0$\,.

{\em (ii)}\,  If $f\in N$ and $f$ is $(<_{1}, <_{2}, \dots <_{p})$-reduced with respect to $G$, then $f=0$.
\end{prop}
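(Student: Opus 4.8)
The plan is to prove both parts simultaneously by exploiting the defining property of a Gröbner basis together with the termination of Algorithm 3.1. For part (i), one direction is immediate: if $f \xrightarrow[<_{1},\dots,<_{p}]{G} 0$, then by the definition of reduction $f = \sum_{i=1}^{t} Q_i g_i$ for suitable $Q_i \in A_n(K)$, so $f \in N$ since each $g_i \in N$. For the converse, suppose $f \in N$. Apply Algorithm 3.1 (with respect to $<_1$, using the conditions on $\ord_j$ for $j = 2,\dots,p$ built into its \textbf{While} loop) to obtain an element $g \in E$ and operators $Q_1,\dots,Q_t$ with $f - g = \sum_{i=1}^{t} Q_i g_i$ and $g$ being $(<_1,\dots,<_p)$-reduced with respect to $G$. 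Since each $g_i \in N$ and $f \in N$, we get $g = f - \sum Q_i g_i \in N$. Thus it suffices to prove part (ii): any $g \in N$ that is $(<_1,\dots,<_p)$-reduced with respect to $G$ must be zero. Granting (ii), we conclude $g = 0$, which means precisely that the reduction process took $f$ to $0$, establishing (i).

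For part (ii), the argument is by contradiction using the well-ordering of $\Theta e$ under $<_1$. Suppose $g \in N$ is $(<_1,\dots,<_p)$-reduced with respect to $G$ but $g \neq 0$. Consider the $1$st leader $u_g^{(1)}$. Since $g \in N$ and $G$ is a Gröbner basis, Definition 3.4 gives some $g_i \in G$ with $\rho(g_i) \mid \rho(g)$ in $\Gamma e$. By the discussion in Remark 3.5, this divisibility unpacks to: $\frac{u_g^{(1)}}{u_{g_i}^{(1)}} = \theta \in \Theta$ (so $u_{g_i}^{(1)} \mid u_g^{(1)}$ as terms) and $d_j(g_i) \le d_j(g)$ for $j = 2,\dots,p$, the latter being equivalent to $\ord_j(\theta\, u_{g_i}^{(j)}) \le \ord_j u_g^{(j)}$ for $j = 2,\dots,p$. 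But $u_g^{(1)}$ is a term appearing in $g$ with the term $\theta u_{g_i}^{(1)}$ dividing it — wait, more precisely $u_g^{(1)}$ \emph{is} the term $\theta u_{g_i}^{(1)}$ itself — and the order conditions are exactly the ones forbidden by $(<_k, <_{i_1},\dots,<_{i_r})$-reducedness in Definition 3.3 (with $k = 1$ and $\{i_1,\dots,i_r\} = \{2,\dots,p\}$). This contradicts the assumption that $g$ is $(<_1,\dots,<_p)$-reduced with respect to $g_i$. Hence $g = 0$.

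The main subtlety — and the step I would be most careful about — is the precise bookkeeping in Remark 3.5 that translates the compact condition $\rho(g_i) \mid \rho(f)$ into the order inequalities $\ord_j(\theta u_{g_i}^{(j)}) \le \ord_j u_f^{(j)}$ for $j \ge 2$, and verifying that this is genuinely the negation of reducedness as defined in Definition 3.3. Specifically, one must check that $d_j(g_i) \le d_j(g)$ together with $\ord_1(\theta u_{g_i}^{(1)}) = \ord_1 u_g^{(1)}$ (which holds because $\theta = u_g^{(1)}/u_{g_i}^{(1)}$ and $\ord_1$ is additive under the relevant division) forces $\ord_j(\theta u_{g_i}^{(j)}) = \ord_j \theta + \ord_j u_{g_i}^{(j)} \le \ord_j u_g^{(j)}$, using $d_j(g) = \ord_j u_g^{(j)} - \ord_j u_g^{(1)}$ and the analogous expression for $g_i$. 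Since this is precisely the computation laid out in Remark 3.5, I would simply invoke it rather than redo it. Everything else — the reduction-to-a-sum identity, membership in $N$, and the well-ordering argument — is routine, so the proof is short once (ii) is in hand and one direction of (i) has been observed to be trivial.
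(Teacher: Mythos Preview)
Your proposal is correct and follows the natural line of argument: use Algorithm~3.1 to reduce $f$ to a $(<_{1},\dots,<_{p})$-reduced element $g\in N$, then invoke (ii), which in turn is immediate from Definition~3.4 together with Remark~3.5. The paper itself does not give a proof but refers to \cite[Section~4]{Levin1}, where the analogous statements are proved by essentially the same mechanism; your argument is exactly the adaptation one would expect, so there is nothing to add.
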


\begin{prop}
Let $G = \{g_{1},\dots, g_{t}\}$ be a Gr\"obner basis of an $A_{n}(K)$-submodule $N$ of $E$ with respect to each of the following sequences of orders: $<_{p}$;\, $<_{p-1}, <_{p}$;\, \dots
; $<_{r+1},\dots, <_{p}$ ($1\leq r\leq p-1$). Furthermore, suppose that

\noindent$S_{r}(g_{i}, g_{j}) \xrightarrow[<_{r}, <_{r+1}, \dots <_{p}]{\text{$G$}} 0$ for any $g_{i}, g_{j}\in G$.

\noindent Then $G$ is a Gr\"obner basis of $N$ with respect to the sequence of orders $<_{r}, <_{r+1}, \dots, <_{p}$.
\end{prop}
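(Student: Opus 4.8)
We sketch a proof modeled on the single--ordering Buchberger argument of \cite[Section 4]{Levin1}. By Definition 3.4 it suffices to show that for every nonzero $f\in N$ there is $g\in G$ with $\rho(g)\mid\rho(f)$, where throughout $\rho$, $d_{j}$, etc.\ refer to the sequence $<_{r},<_{r+1},\dots,<_{p}$, so that $d_{j}(f)=\ord_{j}u_{f}^{(j)}-\ord_{j}u_{f}^{(r)}$ for $r+1\le j\le p$. Unravelling the divisibility in $\Gamma e$, the goal is to find $g\in G$ with $u_{g}^{(r)}\mid u_{f}^{(r)}$ such that, putting $\theta=u_{f}^{(r)}/u_{g}^{(r)}$, one has $\ord_{j}(\theta u_{g}^{(j)})\le\ord_{j}u_{f}^{(j)}$ for $r+1\le j\le p$. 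Since $G$ is a Gr\"obner basis with respect to $<_{p}$ it generates $N$ over $A_{n}(K)$, so, using the $K$-basis $\Theta$ of $A_{n}(K)$, every $f\in N$ can be written $f=\sum_{i=1}^{s}c_{i}\tau_{i}g_{k_{i}}$ with $0\neq c_{i}\in K$, $\tau_{i}\in\Theta$, $g_{k_{i}}\in G$. Recall that for a monomial $\sigma$ and a term $v$ one has $\sigma\cdot v=(\sigma\star v)+(\text{lower terms})$, where $\sigma\star v$ is the term obtained by multiplying the $x$-parts and the $\partial$-parts of $\sigma$ and $v$ and the lower terms are $<_{\ell}$-smaller than $\sigma\star v$ and of no larger $\ell$th order, for every $\ell$; in particular left multiplication by a monomial $\sigma$ adds $\ell$th orders and sends the $\ell$th leader of a nonzero element $h$ to $\sigma\star u_{h}^{(\ell)}$, so that $\ord_{\ell}(\tau_{i}g_{k_{i}})=\ord_{\ell}\tau_{i}+\ord_{\ell}u_{g_{k_{i}}}^{(\ell)}$.

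Call a representation $f=\sum_{i}c_{i}\tau_{i}g_{k_{i}}$ as above \emph{admissible} if $\ord_{j}(\tau_{i}g_{k_{i}})\le\ord_{j}u_{f}^{(j)}$ for all $i$ and all $j\in\{r+1,\dots,p\}$. The first step is to show that admissible representations exist. Since $G$ is a Gr\"obner basis with respect to the sequence $<_{r+1},\dots,<_{p}$, Proposition 3.6(i) (for that sequence) yields a reduction $f\xrightarrow[<_{r+1},\dots,<_{p}]{G}0$, i.e.\ a chain $f=h_{0}\to\cdots\to h_{q}=0$ in which each step subtracts a scalar multiple of some $\theta g$ ($g\in G$) with $\ord_{j}(\theta g)\le\ord_{j}u_{h_{t}}^{(j)}$ for $r+1\le j\le p$ --- the case $j=r+1$ holding because $\theta=w/u_{g}^{(r+1)}$ for a term $w$ of $h_{t}$. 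Consequently $\ord_{j}u_{h_{t}}^{(j)}$ is non-increasing in $t$, every multiplier used satisfies $\ord_{j}(\theta g)\le\ord_{j}u_{f}^{(j)}$, and collecting the terms of the chain produces an admissible representation of $f$.

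Next I would fix an admissible representation of $f$ that is minimal: first with respect to $w:=\max_{<_{r}}\{\tau_{i}\star u_{g_{k_{i}}}^{(r)}\mid 1\le i\le s\}$ in the well-order $<_{r}$ on $\Theta e$, and then with the smallest number of indices attaining this maximum. If at least two indices, say $i$ and $i'$, attain $w$, then with $\sigma=w/\lcm(u_{g_{k_{i}}}^{(r)},u_{g_{k_{i'}}}^{(r)})$ a short telescoping expresses the sum of the terms $c_{i}\tau_{i}g_{k_{i}}$ over all indices attaining $w$ as one term with $<_{r}$-leader $w$, plus a $K$-linear combination of monomial multiples $\sigma\,S_{r}(g_{k_{i}},g_{k_{i'}})$, plus correction terms whose $<_{r}$-leaders are $<_{r}w$. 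Because $i$ and $i'$ are admissible and left multiplication by a monomial adds $j$th orders, each $\sigma\,S_{r}(g_{k_{i}},g_{k_{i'}})$ has $j$th order $\le\ord_{j}u_{f}^{(j)}$ for $r+1\le j\le p$; the hypothesis $S_{r}(g_{k_{i}},g_{k_{i'}})\xrightarrow[<_{r},<_{r+1},\dots,<_{p}]{G}0$ then rewrites each such $S$-polynomial as an $A_{n}(K)$-combination of $G$ all of whose terms keep $j$th order $\le\ord_{j}u_{f}^{(j)}$ ($j>r$) and have $<_{r}$-leader $<_{r}w$. The outcome is a new admissible representation of $f$ that is strictly smaller than the chosen one --- either a smaller $w$, or the same $w$ with fewer attaining indices --- contradicting minimality. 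Hence exactly one index attains $w$; then $w$ genuinely occurs in $f$, so $w\le_{r}u_{f}^{(r)}\le_{r}w$, i.e.\ $w=u_{f}^{(r)}$. For this unique index $i$, put $g=g_{k_{i}}$ and $\theta=\tau_{i}=u_{f}^{(r)}/u_{g}^{(r)}$; then $u_{g}^{(r)}\mid u_{f}^{(r)}$, and admissibility gives $\ord_{j}(\theta u_{g}^{(j)})=\ord_{j}(\tau_{i}g_{k_{i}})\le\ord_{j}u_{f}^{(j)}$ for $r+1\le j\le p$, i.e.\ $\rho(g)\mid\rho(f)$, as required.

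I expect the main obstacle to be the bookkeeping in the telescoping step: one must be certain that replacing the cancelling leading terms by $S_{r}$-polynomials and reducing those to zero never raises any $j$th order ($j>r$) above $\ord_{j}u_{f}^{(j)}$. This is exactly what the constrained reduction $\xrightarrow[<_{r},<_{r+1},\dots,<_{p}]{G}$ in the hypothesis guarantees, and the reason the hypothesis demands reduction of the $S_{r}(g_{i},g_{j})$ with respect to the whole sequence $<_{r},<_{r+1},\dots,<_{p}$ rather than just $<_{r}$; the role of $G$ being already a Gr\"obner basis for the shorter tail $<_{r+1},\dots,<_{p}$ is to make the class of admissible representations nonempty, which is precisely what forces the $d_{j}$-conditions in the final step. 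Everything else is a routine adaptation of the classical Buchberger criterion, exactly as in \cite[Section 4]{Levin1}.
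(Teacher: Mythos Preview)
Your proposal is correct and follows essentially the same approach the paper points to: the paper itself does not give a proof but says the result ``can be obtained by mimicking the proof of the corresponding statements in \cite[Section 4]{Levin1},'' and your sketch is precisely such a Buchberger-style argument adapted to the multi-order setting. Your identification of the two key ingredients --- that the Gr\"obner basis property for the shorter tail $<_{r+1},\dots,<_{p}$ furnishes admissible representations, and that the constrained reduction hypothesis on $S_{r}(g_{i},g_{j})$ keeps all $j$th orders ($j>r$) bounded during the rewriting --- matches exactly what the referenced argument needs.
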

Note that the last proposition, together with Algorithm 1, gives an algorithm for the computation of Gr\"obner bases of a submodule of a finitely generated free $A_{n}(K)$-module with respect to a sequence of several term orderings. This algorithm (together with Algorithm 3.1) is currently being implemented in MAPLE and PYTHON.

\begin{ex}
 Let $E$ be a free $A_2(K)$ module with free generators $e_1,e_2$. Consider the $A_n(K)$-submodule $N$ of $E$ generated by elements $h_{1}= x_{2}^{2}\partial_{1}\partial_{2}e_{2} + x_{1}x_{2}\partial_{1}\partial_{2}e_{1}$
 and $h_{2} = x_{2}\partial_{1}^{2}e_{2} + x_{1}\partial_{1}^{2}e_{1}$. With the above notation, $u_{h_{1}}^{(2)} = x_{2}^{2}\partial_{1}\partial_{2}e_{2}$, $u_{h_{2}}^{(2)} = x_{2}\partial_{1}^{2}e_{2}$ and
 $$S_{2}(h_{1}, h_{2}) = \partial_{1}h_{1} - x_{2}\partial_{2}h_{2} = x_{2}\partial_{1}\partial_{2}e_{1} - x_{2}\partial_{1}^{2}e_{2}.$$ This element is not $<_{2}$-reduced with respect to $\{h_{1}, h_{2}\}$, so we set
 $$h_{3} = x_{2}\partial_{1}\partial_{2}e_{1} - x_{2}\partial_{1}^{2}e_{2}; \,u_{h_{3}}^{(2)} = x_{2}\partial_{1}\partial_{2}e_{1}.$$ It is easy to see that $S_{2}(h_{1}, h_{3}) = S_{2}(h_{2}, h_{3}) = 0$, so
 $\{h_{1}, h_{2}, h_{3}\}$ is a Gr\"obner basis of $N$ with respect to $<_{2}$.

We have $u_{h_{1}}^{(1)} = x_{1}x_{2}\partial_{1}\partial_{2}e_{1}$, $u_{h_{2}}^{(1)} = x_{1}\partial_{1}^{2}e_{1}$, and $u_{h_{3}}^{(1)} = x_{2}\partial_{1}^{2}e_{2}$. Therefore,
$$S_{1}(h_{1}, h_{2}) = \partial_{1}h_{1} - x_{2}\partial_{2}h_{2} = h_{3}\xrightarrow[<_{1}, <_{2}]{\text{$h_{3}$}} 0.$$ Also, $S_{1}(h_{1}, h_{3}) = S_{1}(h_{2}, h_{3}) = 0$, since
$\lcm(u_{h_{1}}^{(1)}, u_{h_{3}}^{(1)}) = \lcm(u_{h_{2}}^{(1)}, u_{h_{3}}^{(1)}) = 0$. Thus, $\{h_{1}, h_{2}, h_{3}\}$ is a Gr\"obner basis of $N$ with respect to $<_{1}, <_{2}$.

Note that this is not a reduced Gr\"obner basis of $N$ with respect to $<_{1}$, since $h_{2}$ contains the $1$-leader of $h_{3}$. However, $\ord_{2}(1\cdot u_{h_{3}}^{(2)}) = 2 > \ord_{2}u_{h_{2}}^{(2)} = 1$, so
$h_{2}$ is ($<_{1}, <_{2}$)-reduced with respect to $h_{3}$.
\end{ex}

\section{Multivariate Bernstein-type Polynomials}

In this section we consider the Weyl algebra $A_{n}(K)$ as a ring with a $p$-dimensional filtration \\$\{W_{r_{1},\dots, r_{p}}\,|\,(r_{1},\dots, r_{p})\in\mathbb{Z}^{p}\}$ introduced in the previous section.  Recall that  $W_{r_{1},\dots, r_{p}}=0$, if at least one of the numbers $r_{i}$ is negative, and if $r_{i}\geq 0$ for $i=1,\dots, p$, then $W_{r_{1},\dots, r_{p}}$ is a $K$-vector space with a basis $\Theta(r_{1},\dots, r_{p})$. It follows from the third statement of Theorem 2.5 (taking into account that the set $X\bigcup\Delta$ in (3.1) is the disjoint union of $p$ $2n_{i}$-element sets $X_{i}\bigcup\Delta_{i}$, $1\leq i\le p$) that for any $r_{1},\dots, r_{p}\in\mathbb{N}$, we have
$$\dim_{K}W_{r_{1},\dots, r_{p}} = \Card\,\Theta(r_{1},\dots, r_{p}) = \prod_{i=1}^{p}{r_{i}+2n_{i}\choose 2n_{i}}.$$

In what follows we use properties of Gr\"obner bases with respect to several term-orderings (that correspond to the partition (3.1) of the set of generators of $A_{n}(K)$) to prove the existence and determine
invariants of multivariate dimension polynomials of finitely generated $A_{n}(K)$-modules. The following result is the main step in this direction (we use the notation and conventions of the previous section).

\begin{thm}
Let $M$ be a left $A_{n}(K)$-module generated by a finite set $\{f_{1},\dots, f_{m}\}$, and let $E$ be a free left $A_{n}(K)$--module with free generators $e_{1}, \dots, e_{m}$. Let $\pi: E\longrightarrow M$ be the natural
$A_{n}(K)$-epimorphism ($\pi(e_{i}) = f_{i}$ for $i=1, \dots, m$), $N = \Ker\pi$, and $G = \{g_{1}, \dots, g_{d}\}$ a Gr\"obner basis of the $A_{n}(K)$-module $N$ with respect to $<_{1},\dots, <_{p}$. Furthermore, for any
$(r_{1},\dots, r_{p})\in\mathbb{N}^{p}$, let $M_{r_{1},\dots, r_{p}} = \D\sum_{i=1}^{m}W_{r_{1},\dots, r_{p}}f_{i}$, \,\,$V_{r_{1},\dots, r_{p}} = \{u\in \Theta e\, |\, \ord_{i}u\leq r_{i}$ \,($1\leq i\leq
p$) and $u_{g}^{(1)}\nmid u$ for any $g\in G\}$,\,\, $V'_{r_{1},\dots, r_{p}} = \{u\in \Theta(r_{1},\dots, r_{p})e \setminus V_{r_{1},\dots, r_{p}}\,|\,$ whenever $u_{g}^{(1)}|u$ for some $g\in G$ and
$\theta = {\D\frac{u}{u_{g}^{(1)}}}$, there exists $i\in \{2,\dots, p\}$ such that $\ord_{i}\theta u_{g}^{(i)} > r_{i}\}$, and\\ $U_{r_{1}\dots r_{p}} = V_{r_{1},\dots, r_{p}}\bigcup V'_{r_{1},\dots, r_{p}}$. \,
Then for any $(r_{1}, \dots, r_{p})\in\mathbb{N}^{p}$, the set $\pi(U_{r_{1},\dots, r_{p}})$ is a basis of $M_{r_{1},\dots, r_{p}}$ over $K$.
\end{thm}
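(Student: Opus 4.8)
The plan is to show, for a fixed $(r_{1},\dots,r_{p})\in\mathbb{N}^{p}$, that $\pi(U_{r_{1},\dots,r_{p}})$ spans $M_{r_{1},\dots,r_{p}}$ over $K$ and that it is $K$-linearly independent; write $\mathbf{r}=(r_{1},\dots,r_{p})$ for brevity. For spanning, I would start from the fact that $M_{\mathbf r}=\pi\bigl(W_{r_{1},\dots,r_{p}}e_{1}+\dots+W_{r_{1},\dots,r_{p}}e_{m}\bigr)=\pi\bigl(\langle\Theta(\mathbf r)e\rangle_{K}\bigr)$, so it suffices to show that for every term $u\in\Theta(\mathbf r)e$ the image $\pi(u)$ lies in the $K$-span of $\pi(U_{\mathbf r})$. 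If $u\in V_{\mathbf r}$ or $u\in V'_{\mathbf r}$ we are done, so assume $u\in\Theta(\mathbf r)e\setminus U_{\mathbf r}$; then there is $g\in G$ with $u_{g}^{(1)}\mid u$ and, setting $\theta=u/u_{g}^{(1)}\in\Theta$, one has $\ord_{i}(\theta u_{g}^{(i)})\le r_{i}$ for all $i=2,\dots,p$ (and trivially for $i=1$, since $\ord_{1}\theta+\ord_{1}u_{g}^{(1)}=\ord_{1}u\le r_{1}$). Now $\theta g\in N$, so $\pi(\theta g)=0$; since $u_{g}^{(1)}$ is the $<_{1}$-leader of $g$, the term $u=\theta u_{g}^{(1)}$ is the $<_{1}$-leader of $\theta g$ and every other term $u'$ appearing in $\theta g$ satisfies $u'<_{1}u$ while still obeying $\ord_{i}u'\le\ord_{i}(\theta u_{g}^{(i)})\le r_{i}$ for $i=1,\dots,p$ — this uses the estimate, recalled in the excerpt, that for $\theta|\theta'$ one writes $\theta'=\theta_{0}\theta-\sum\theta_{\nu}$ with each $\theta_{\nu}<_{j}\theta'$ and $\ord_{j}\theta_{\nu}\le\ord_{j}\theta'$ for all $j$, applied termwise. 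Hence $\pi(u)$ equals a $K$-linear combination of $\pi(u')$ with $u'\in\Theta(\mathbf r)e$ and $u'<_{1}u$, and a Noetherian induction on the well-ordered set $(\Theta(\mathbf r)e,<_{1})$ finishes the spanning part.

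For linear independence, suppose a nontrivial relation $\sum_{u\in U_{\mathbf r}}c_{u}\pi(u)=0$ with $c_{u}\in K$; then $f:=\sum c_{u}u$ is a nonzero element of $N=\Ker\pi$, and every term appearing in $f$ lies in $U_{\mathbf r}\subseteq\Theta(\mathbf r)e$. Because $G$ is a Gröbner basis of $N$ with respect to $<_{1},\dots,<_{p}$, Definition 3.4 gives some $g\in G$ with $\rho(g)\mid\rho(f)$; unwinding $\rho$ as in Remark 3.5, if $w:=u_{f}^{(1)}$ is the $<_{1}$-leader of $f$ then $u_{g}^{(1)}\mid w$ and, with $\theta=w/u_{g}^{(1)}$, one has $\ord_{i}(\theta u_{g}^{(i)})\le\ord_{i}u_{f}^{(i)}$ for $i=2,\dots,p$. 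But $w$ is a term of $f$, hence $w\in U_{\mathbf r}$; since $u_{g}^{(1)}\mid w$, by construction $w\notin V_{\mathbf r}$, so $w\in V'_{\mathbf r}$, which by definition forces the existence of $i\in\{2,\dots,p\}$ with $\ord_{i}(\theta u_{g}^{(i)})>r_{i}$. On the other hand $w\in\Theta(\mathbf r)e$ gives $\ord_{i}u_{f}^{(i)}\le\ord_{i}w\le r_{i}$ (here one checks that the $<_{1}$-leader $w$ of $f$, since $f$ has all terms of $i$-th order $\le r_{i}$, itself has $i$-th order $\le r_{i}$, so its $<_{i}$-leader contributes nothing larger), whence $\ord_{i}(\theta u_{g}^{(i)})\le\ord_{i}u_{f}^{(i)}\le r_{i}$ — a contradiction. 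Therefore $f=0$ and all $c_{u}=0$.

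The main obstacle I anticipate is the bookkeeping in the independence argument: one must be careful that $\rho(g)\mid\rho(f)$ really does yield the inequalities $\ord_{i}(\theta u_{g}^{(i)})\le\ord_{i}u_{f}^{(i)}$ in the right direction, and that the membership of $w$ in $V'_{\mathbf r}$ (rather than $V_{\mathbf r}$) is correctly forced — the definition of $V'_{\mathbf r}$ quantifies over all $g\in G$ with $u_{g}^{(1)}\mid w$, so I must either use that the $g$ supplied by the Gröbner property witnesses the defining violation, or argue that if no such $g$ violated it then $w\in V_{\mathbf r}$, contradicting $u_{g}^{(1)}\mid w$. In the spanning part the only delicate point is confirming that the reduction of $u$ stays inside the box $\Theta(\mathbf r)e$ at every step, i.e.\ that the auxiliary monomials $\theta_{\nu}$ produced when rewriting $\theta u_{g}^{(1)}$ and the lower terms of $g$ never increase any $\ord_{i}$ beyond $r_{i}$; this is exactly what the order estimates preceding the present section guarantee, applied coordinatewise.
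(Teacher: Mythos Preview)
Your proposal is correct and follows essentially the same route as the paper. The spanning argument is identical (Noetherian induction on $<_{1}$ inside the box $\Theta(\mathbf r)e$, reducing via some $g\in G$ witnessing $u\notin U_{\mathbf r}$); for independence the paper phrases the same idea slightly differently, observing that $h=\sum c_{u}u$ is $(<_{1},\dots,<_{p})$-reduced with respect to $G$ and then invoking Proposition~3.6(ii), whereas you unwind Definition~3.4/Remark~3.5 directly --- these are the same argument in different clothing. One small slip: the chain $\ord_{i}u_{f}^{(i)}\le\ord_{i}w$ is the wrong way around (the $<_{i}$-leader maximizes $\ord_{i}$), but your parenthetical already supplies the correct reason, namely that \emph{every} term of $f$ lies in $\Theta(\mathbf r)e$, so in particular $\ord_{i}u_{f}^{(i)}\le r_{i}$.
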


\begin{proof}
First let us prove that the set  $\pi(U_{r_{1}\dots r_{p}})$ is linearly independent over $K$.  Suppose that \\$\sum_{i=1}^{k}{a_{i}\pi(u_{i})} = 0$ for some $u_{1},\dots, u_{k}\in U_{r_{1}\dots r_{p}},\, a_{1},\dots, a_{k}\in K$. Then the element $h=\sum_{i=1}^{k}{a_{i}u_{i}}\in N$ is reduced with respect to $G$. Indeed, if a term $u=\theta e_{j}$ appears in $h$ (so that $u=u_{i}$ for some $i=1,\dots, k$), then either $u$ is not a multiple of any
$u_{g_{k}}^{(1)}$ ($1\leq k\leq d$) or $u=\theta u_{g_{k}}^{(1)}$ for some $\theta \in \Theta,\, 1\leq k \leq d$, such that $\ord_{j}(\theta u_{g_{k}}^{(j)}) > r_{j}$ for some $j\in\{2,\dots, p\}$.
By Proposition 3.6, $h = 0$, whence $a_{1}= \dots =a_{k} = 0$.

Now we are going to prove that the set $\pi(U_{r_{1},\dots, r_{p}})$ generates $M_{r_{1},\dots, r_{p}}$ as a vector space over $K$. Clearly, it is sufficient to show that every element of the form $\theta f_{i}$,
where $\theta\in\Theta(r_{1},\dots, r_{p})$ ($1\leq i\leq m$) and $\theta f_{i}\notin\pi(U_{r_{1},\dots, r_{p}})$, is a linear combination of elements of $\pi(U_{r_{1}\dots r_{p}})$ with coefficients in $K$. Indeed, since
$\theta f_{i}\notin\pi(U_{r_{1},\dots, r_{p}})$, $\theta e_{i}\notin U_{r_{1},\dots, r_{p}}$, hence $\theta e_{i}=\theta'u_{g_{j}}^{(1)}$ for some $\theta'\in \Theta$, $1\leq j\leq d$, such that $\ord_{k}(\theta'u_{g_{j}}^{(k)})\leq r_{k}$ for $k=2,\dots, p$. Let us consider the element $g_{j}=a_{j}u^{(1)}_{g_{j}}+ h_{j}$\, ($a_{j}\in K, a_{j}\ne 0$), where $h_{j}$ is a linear combination with coefficients in $K$ of terms that are less than $u_{g_{j}}^{(1)}$ with respect to the order $<_1$. Since $g_{j}\in N=\Ker\,\pi$, $\pi(g_{j})=a_{j}\pi(u_{g_j}^{(1)})+ \pi(h_{j}) = 0$. It follows that
$\pi(\theta'g_{j})=a_{j}\pi(\theta'u_{g_{j}}^{(1)}) + \theta h_{j}) = a_{j}\pi(\theta e_{i}) + \pi(h') = 0$ where $h'$ is a linear combination with coefficients in $K$ of terms $v\in\Theta(r_{1},\dots, r_{p})e$ such that
$v <_{1}\theta e_{i}$. Now we can apply the induction on the well-ordered (with respect to $<_{1}$) set $\Theta e$ to obtain that every element $\theta f_{i}$, such that $\theta\in\Theta(r_{1},\dots, r_{p})$ and $1\leq i\leq m$,
is a linear combination with coefficients in $K$ of element of $\pi(U_{r_{1},\dots, r_{p}})$.
\end{proof}

\begin{thm}
With the notation of Theorem 4.1, there exists a numerical polynomial $\phi_{M}(t_{1}, \dots, t_{p})$ with the following properties.

\smallskip

{\em (i)} $\phi_{M}(r_{1}, \dots, r_{p}) = \dim_{K} M_{r_{1}, \dots, r_{p}}$ for all sufficiently large $(r_{1},\dots, r_{p})\in\mathbb{Z}^{p}$;

\smallskip

{\em (ii)} \, $n_{i}\leq\deg_{t_{i}}\phi_{M}\leq 2n_{i}$ ($1\leq i\leq p$), so that $n\leq deg\,\phi_{M}\leq 2n$ and the polynomial $\phi_{M}(t_{1},\dots, t_{p})$ can be represented as
\begin{equation}
\phi_{M}(t_{1},\dots, t_{p}) = \D\sum_{i_{1}=0}^{2n_{1}}\dots \D\sum_{i_{p}=0}^{2n_{p}}a_{i_{1}\dots i_{p}} {t_{1}+i_{1}\choose i_{1}}\dots {t_{p}+i_{p}\choose i_{p}}
\end{equation}
where $a_{i_{1}\dots i_{p}}\in\mathbb{Z}$ for all $i_{1},\dots, i_{p}$.

\smallskip

{\em (iii)} \, The $A_{n}(K)$-module $M$ is holonomic if and only if $ \deg\phi_{M} = n$.
\end{thm}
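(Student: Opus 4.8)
The proof will rest on Theorem 4.1, which identifies a $K$-basis $\pi(U_{r_1,\dots,r_p})$ of $M_{r_1,\dots,r_p}$, so that $\dim_K M_{r_1,\dots,r_p}=\Card U_{r_1,\dots,r_p}$ for all $(r_1,\dots,r_p)\in\mathbb{N}^p$. My first step is to express $\Card U_{r_1,\dots,r_p}$ in terms of dimension polynomials of explicit subsets of $\mathbb{N}^n$. Recall $U=V\cup V'$ (disjoint). The set $V_{r_1,\dots,r_p}$ consists of terms $\theta e_i$ with $\ord_j\theta\le r_j$ and $u_g^{(1)}\nmid\theta e_i$ for all $g\in G$; fixing the component $e_i$, this is exactly $\Theta(r_1,\dots,r_p)$ minus those $\theta$ divisible (in the $\Theta$-sense) by some $\theta_{g}^{(1)}:=$ the $\Theta$-part of $u_g^{(1)}$ whose generator-index equals $i$. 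Encoding each monomial $x^\alpha\partial^\beta$ by the exponent vector $(\alpha,\beta)\in\mathbb{N}^{2n}$ and recalling that the partition $(n_1,\dots,n_p)$ of $n$ induces the partition $(2n_1,\dots,2n_p)$ of $2n$ compatible with the $j$th-order gradings, divisibility in $\Theta$ becomes the product order on $\mathbb{N}^{2n}$, and the condition "$\ord_j\le r_j$" becomes the condition defining $A(r_1,\dots,r_p)$ in the sense of Section 2. Hence $\Card V_{r_1,\dots,r_p}=\sum_{i=1}^m \Card V_{A_i}(r_1,\dots,r_p)$ where $A_i\subseteq\mathbb{N}^{2n}$ is the (finite) set of exponent vectors of $\{\theta_g^{(1)}: g\in G,\ \text{index}(u_g^{(1)})=i\}$, and by Theorem 2.5 each summand is eventually equal to the numerical polynomial $\omega_{A_i}(t_1,\dots,t_p)$.

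**Handling $V'$.** The set $V'_{r_1,\dots,r_p}$ is trickier because its defining condition — "for every $g$ with $u_g^{(1)}\mid u$ and $\theta=u/u_g^{(1)}$, some $\ord_i\theta u_g^{(i)}>r_i$" — is not directly a product-order condition on a single $\mathbb{N}^{2n}$. My approach is to bound $\Card V'_{r_1,\dots,r_p}$ above and below by quantities whose polynomiality and degree are controlled, and to show these bounds pin down the leading behaviour. For the upper bound: $V'\subseteq\Theta(r_1,\dots,r_p)e\setminus V$, so $\Card U_{r_1,\dots,r_p}\le\Card\Theta(r_1,\dots,r_p)e=m\prod_{i=1}^p\binom{r_i+2n_i}{2n_i}$, giving $\deg_{t_i}\phi_M\le 2n_i$. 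For the lower bound, $\Card U_{r_1,\dots,r_p}\ge\Card V_{r_1,\dots,r_p}=\sum_i\Card V_{A_i}(r_1,\dots,r_p)$; here I invoke the univariate Bernstein theorem (Theorem 2.1) applied coordinatewise — more precisely, I note that for each fixed $j$, collapsing all other gradings recovers the classical order filtration in the $2n_j$ "basic" directions $X_j\cup\Delta_j$ acting on a finitely generated module, which forces $\deg_{t_j}$ of the relevant polynomial to be $\ge n_j$. Then the existence of a single numerical polynomial $\phi_M$ agreeing with $\Card U$ for large arguments follows once I show $\Card U_{r_1,\dots,r_p}$ is *exactly* a numerical polynomial for large $(r_1,\dots,r_p)$ — which I will get by writing $\Card U=\Card V+\Card V'$ and computing $\Card V'$ as a finite inclusion–exclusion over $G$: for each $g$, the "excess" terms it contributes to $U$ beyond $V$ are those $u=\theta u_g^{(1)}$ with $\ord_j\theta u_g^{(1)}\le r_j$ but $\ord_i\theta u_g^{(i)}>r_i$ for some $i\ge2$; translating $\theta$ to its exponent vector, each of these conditions is again a product-order / bounded-degree condition on $\mathbb{N}^{2n}$, so by Theorem 2.5 (and the remark following 2.7 on passing to minimal elements) the count is eventually polynomial, and the overlaps among different $g$ are handled by finite inclusion–exclusion, each term again eventually polynomial by Theorem 2.5. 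Summing finitely many numerical polynomials yields the numerical polynomial $\phi_M$, establishing (i), and the degree bounds in (ii) combine the upper and lower bounds above; the representation (4.1) is then just Theorem 2.3.

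**The holonomicity criterion (iii).** By definition $M$ is holonomic iff $d(M)=n$, where $d(M)$ refers to the classical (univariate) Bernstein dimension. The key link is that the univariate filtration $M_r=\sum_i W_r f_i$ satisfies $M_r=M_{r,r,\dots,r}$ — since $\ord D\le r$ is equivalent to $\sum_j\ord_j D\le r$ in an appropriate sense? No: one must be careful, $\ord D\le r$ is *not* $\ord_j D\le r$ for all $j$. Instead I will argue: $M_{r,\dots,r}\supseteq M_r$ always (each $\ord_j\le\ord$), and conversely $M_r\supseteq M_{r_1,\dots,r_p}$ when $r\ge r_1+\cdots+r_p$; hence $\dim_K M_r$ is sandwiched between $\phi_M(r,\dots,r)$ and $\phi_M(r_1,\dots,r_p)$-type values, forcing $\psi_M(r)\le\phi_M(r,\dots,r)\le\psi_M(pr)$ (roughly) for large $r$. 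Therefore $\deg\psi_M=\deg_{\text{"diagonal"}}\phi_M$, and this diagonal degree equals $\deg\phi_M=\sum_i\deg_{t_i}\phi_M$ precisely because all the $\deg_{t_i}\phi_M$ contribute positively (the leading terms in distinct variables cannot cancel on the diagonal — they have positive coefficients as counts). Thus $\deg\phi_M=n$ $\iff\sum_i\deg_{t_i}\phi_M=n\iff\deg_{t_i}\phi_M=n_i$ for every $i$ (given the bounds $n_i\le\deg_{t_i}\phi_M$) $\iff\deg\psi_M=n\iff d(M)=n\iff M$ holonomic.

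**Main obstacle.** The delicate point is the second paragraph: proving that $\Card V'_{r_1,\dots,r_p}$ — hence $\Card U_{r_1,\dots,r_p}$ — is genuinely eventually polynomial, not merely bounded. The inclusion–exclusion over $G$ must be set up so that every atomic piece is a bounded-multidegree subset-count of $\mathbb{N}^{2n}$ to which Theorem 2.5 applies; checking that the conditions "$\ord_j\theta u_g^{(j)}\le r_j$" for $j$ in one set and "$>r_i$" for $i$ in another translate cleanly, and that mixed strict/non-strict inequalities can be reabsorbed into the framework of dimension polynomials (e.g.\ by a shift $r_i\mapsto r_i-1$), is the real technical heart. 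The degree lower bound $n_i\le\deg_{t_i}\phi_M$ is the second most delicate point and I expect to derive it by specializing the other variables to large fixed values and invoking Theorem 2.1 part (ii) for the resulting one-variable situation.
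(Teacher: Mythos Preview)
Your treatment of (i) and the upper bound $\deg_{t_i}\phi_M\le 2n_i$ matches the paper: split $\Card U=\Card V+\Card V'$, handle $V$ via Theorem~2.5, and handle $V'$ by inclusion--exclusion. (The paper is more explicit about $V'$: with $b_{ij}=\ord_i u_{g_j}^{(1)}$ and $c_{ij}=\ord_i u_{g_j}^{(i)}$ each atomic piece is a product of binomials and bracket factors $\binom{t_k+2n_k-b_{kj}}{2n_k}-\binom{t_k+2n_k-c_{kj}}{2n_k}$, so no further appeal to Theorem~2.5 is needed there.) For the lower bound $\deg_{t_i}\phi_M\ge n_i$ you take a genuinely different route from the paper. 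The paper adapts Joseph's proof of the Bernstein inequality directly to the multivariate filtration: it shows by induction on $r_1+\cdots+r_p$ that the map $W_{r_1,\dots,r_p}\to\Hom_K(M_{r_1,\dots,r_p},M_{2r_1,\dots,2r_p})$, $D\mapsto(z\mapsto Dz)$, is injective (a nonzero $D$ has a nonzero commutator $[D,\partial_k]$ or $[D,x_k]$ of strictly smaller $i$th order), whence $\prod_i\binom{r_i+2n_i}{2n_i}\le\phi_M(r_1,\dots,r_p)\,\phi_M(2r_1,\dots,2r_p)$. Your plan of freezing $r_k=s_k$ for $k\ne i$ and invoking Theorem~2.1(ii) also works, because the blocks $X_j\cup\Delta_j$ pairwise commute, so $(M_{s_1,\dots,r_i,\dots,s_p})_{r_i}$ is literally the Bernstein filtration of the nonzero finitely generated $A_{n_i}(K)$-module generated by $\{\theta' f_l:\ord_k\theta'\le s_k\ (k\ne i)\}$; you should say this explicitly. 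Your route recycles the univariate inequality; the paper's is self-contained and yields the product inequality above as a bonus.

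There is, however, a real gap in your argument for (iii). You assert $\deg\phi_M=\sum_i\deg_{t_i}\phi_M$, justified by ``leading terms \dots\ have positive coefficients as counts''. This equality fails for polynomials in general (take $t_1^2+t_2^2$), and nothing you have written forces the top monomial of $\phi_M$ to be $\prod_i t_i^{\deg_{t_i}\phi_M}$; positivity of values is not enough. The fix is simpler than your detour and is essentially what the paper does. You already have $\psi_M(r)\le\phi_M(r,\dots,r)\le\psi_M(pr)$, hence $\deg\psi_M=\deg\phi_M(t,\dots,t)\le\deg\phi_M$. From $M_{r_1,\dots,r_p}\subseteq M_{r_1+\cdots+r_p}$ you also get $\phi_M(r_1,\dots,r_p)\le\psi_M(r_1+\cdots+r_p)$ for large arguments, hence $\deg\phi_M\le\deg\psi_M$. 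Combining, $\deg\phi_M=\deg\psi_M$, so $\deg\phi_M=n$ if and only if $\deg\psi_M=n$, i.e.\ if and only if $M$ is holonomic. Drop the claim about $\sum_i\deg_{t_i}\phi_M$ entirely.
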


\begin{proof}
By Theorem 4.1, the set $\pi(U_{r_{1}\dots r_{p}})$ (we use the above notation) is a basis of the $K$-vector space $M_{r_{1}, \dots, r_{p}}$ ($(r_{1}, \dots, r_{p})\in\mathbb{Z}^{p}$), so
$$\dim_{K} M_{r_{1}, \dots, r_{p}} = \Card U_{r_{1}\dots r_{p}} = \Card V_{r_{1}\dots r_{p}} + \Card V'_{r_{1}\dots r_{p}}.$$
By Theorem 2.5 (applied to the case of partition (3.1) of the set $X\bigcup\Delta$ into $p$ disjoint $2n_{i}$-element subsets $X_{i}\bigcup\Delta_{i}$ ($1\leq i\leq p$), there exists a numerical polynomial $\omega(t_{1},\dots, t_{p})$ in $p$ variables such that $\omega (r_{1},\dots, r_{p}) = \Card\,V_{r_{1},\dots, r_{p}}$
for all sufficiently large  $(r_{1},\dots, r_{p})\in\mathbb{Z}^{p} $ and $\deg_{t_{i}}\omega\leq 2n_{i}$ for $i=1,\dots, p$.

In order to express $\Card V'_{r_{1},\dots, r_{p}}$ in terms of $r_{1},\dots, r_{p}$, let us set $b_{ij} = \ord_{i}u_{g_j}^{(1)}$
and $c_{ij} = \ord_{i}u_{g_j}^{(i)}$ for $i=1,\dots, p;\, j= 1,\dots, d$. Then $b_{1j} = c_{1j}$ and $b_{ij}\leq c_{ij}$ for $i=1,\dots, p;\, j=1,\dots, d$. For any $s=1,\dots, p$ and for any $k_{1}, \dots, k_{s}\in\mathbb{N}$ such that $2\leq k_{1} < \dots < k_{s}\leq p$, let
$$V_{j; k_{1},\dots, k_{s}}(r_{1},\dots, r_{p}) = \{\theta u_{g_{j}}^{(1)}\,|\,\ord_{i}\theta \leq r_{i}-b_{ij}\,\,\, \text{for}\,\,\,   i=1,\dots, p\,\,\, \text{and}\hspace{2in}$$
$$\ord_{l}\theta > r_{l}-c_{lj}\,\,\, \text{if and only if}\,\,\,  l \,\,\, \text{is equal to one of the numbers}\,\,\,  k_{1},\dots ,k_{s}\}.\hspace{2in}$$
By Theorem 2.5(iii), $\Card\Theta(r_{1},\dots, r_{p}) = \D\prod_{i=1}^{p}{r_{i}+2n_{i}\choose 2n_{i}}$ for $r_{1},\dots, r_{p}\in\mathbb{N}$. It follows that
$\Card V_{j; k_{1},\dots, k_{s}}(r_{1},\dots, r_{p}) = \phi_{j; k_{1},\dots, k_{s}}(r_{1},\dots, r_{p})$ where
$$\phi_{j; k_{1},\dots, k_{s}}(t_{1},\dots, t_{p}) = {{t_{1}+2n_{1}-c_{1j}}\choose{2n_{1}}}\dots {{t_{k_{1} -1}+2n_{k_{1}-1}-c_{k_{1} -1,j}}\choose{2n_{k_{1} -1}}}$$
$$\biggl[{{t_{k_{1}}+2n_{k_{1}}-b_{k_{1},j}}\choose{2n_{k_{1}}}} - {{t_{k_{1}}+2n_{k_{1}}-c_{k_{1},j}}\choose{2n_{k_{1}}}}\biggr]{{t_{k_{1}+1}+2n_{k_{1}+1}-c_{k_{1}+1,j}}\choose{2n_{k_{1}+1}}}$$
$$\dots {{t_{k_{s} -1}+2n_{k_{s} -1}-c_{k_{s}-1,j}}\choose{2n_{k_{s} -1}}}\biggl[{{t_{k_{s}}+2n_{k_{s}}-b_{k_{s},j}}\choose{2n_{k_{s}}}} - {{t_{k_{s}}+2n_{k_{s}}-c_{k_{s},j}}\choose{2n_{k_{s}}}}\biggr]$$
\begin{equation}
\dots {{t_{p}+2n_{p}-c_{pj}}\choose{2n_{p}}}.\hspace{3.2in}
\end{equation}
Applying the combinatorial principle of inclusion and exclusion we obtain that for any $r_{1}, \dots, r_{p}$,  $\Card V'_{r_{1},\dots, r_{p}}$ is an alternating sum of the numbers of the form
$$\Card V_{j; k_{1},\dots, k_{s}}(r_{1},\dots, r_{p}) = \phi_{j; k_{1},\dots, k_{s}}(r_{1},\dots, r_{p})$$ where $1\leq j\leq d$, $1\leq s\leq p$, $2\leq k_{1} < \dots < k_{s}\leq p$. Since
the degree of any polynomial $\phi_{j; k_{1},\dots, k_{s}}(t_{1},\dots, t_{p})$ with respect to $t_{i}$ ($1\leq i\leq p$) does not exceed $2n_{i}$, we obtain that there exists a numerical polynomial $\psi(t_{1},\dots, t_{p})$ such that $\psi(r_{1},\dots, r_{p}) = \Card V'_{r_{1},\dots, r_{p}}$ for all sufficiently large $(r_{1},\dots, r_{p})\in\mathbb{N}^{p}$  and $\deg_{t_{i}}\psi \leq 2n_{i}$ for $i=1,\dots, p$. Thus, the numerical polynomial
$$\phi_{M}(t_{1}, \dots, t_{p}) = \omega(t_{1},\dots, t_{p}) + \psi(t_{1},\dots, t_{p})$$ satisfies condition (i) of the theorem and $\deg_{t_{i}}\phi_{M}\leq  2n_{i}$ ($1\leq i\leq p$), so it remains to show that $\deg_{t_{i}}\phi_{M}\geq n_{i}$ for $i=1,\dots, p$.

We will prove the last inequalities by using the idea of the A. Joseph's proof of the Bernstein inequality for (univariate) Bernstein polynomial (see \cite[Chapter 9, Section 4]{Co}). Let $(r_{1},\dots, r_{p})\in\mathbb{N}^{p}$ and let a mapping
$$\Phi:W_{r_{1},\dots, r_{p}}\rightarrow \Hom_{K}(M_{r_{1},\dots, r_{p}}, M_{2r_{1},\dots, 2r_{p}})$$ be defined by $\Phi(D) = \Phi_{D}$ where $\Phi_{D}(z) = Dz$ for any $z\in M_{r_{1},\dots, r_{p}}$.
We are going to show that $\Phi$ is injective by induction on $r_{1}+\dots + r_{p}$.
If this number is $0$, that is, $(r_{1},\dots, r_{p})=(0,\dots, 0)$, then $D\in K$ and it is clear that the equality $\Phi_{D}=0$ implies $D=0$. Let $r_{1}+\dots + r_{p} > 0$ and let $D$ be an element of $W_{r_{1},\dots, r_{p}}\setminus K$ such that $\Phi_{D}M_{r_{1},\dots, r_{p}}=0$. Then $D$ has a canonical (unique) representation as a finite sum $\sum_{\alpha, \beta} c_{\alpha \beta}x^{\alpha}\partial^{\beta}$ ($c_{\alpha \beta}\in K$) and there exists a nonzero coefficient $c_{\alpha\beta}$ such that $|\alpha| + |\beta| > 0$, that is, either $|\alpha| > 0$ or $|\beta| > 0$. If $|\alpha| > 0$ and $\alpha_{k} > 0$ ($1\leq k\leq n$), where $x_{k}\in X_{i}$ ($1\leq i\leq p$, see (3.1)), then $\alpha_{k}c_{\alpha\beta}x_{1}^{\alpha_{1}}\dots x_{k-1}^{\alpha_{k-1}}x_{k}^{\alpha_{k}-1}x_{k+1}^{\alpha_{k+1}}\dots x_{n}^{\alpha_{n}}\partial^{\beta}$ is a summand in the canonical representation of $[D, \partial_{k}] = D\partial_{k} - \partial_{k}D$. Therefore, $[D, \partial_{k}]\neq 0$. At the same time,
$[D, \partial_{k}]\in W_{r_{1},\dots, r_{i-1}, r_{i}-1, r_{i+1},\dots, r_{p}}$ and
$$[D, \partial_{k}]M_{r_{1},\dots, r_{i-1}, r_{i}-1, r_{i+1},\dots, r_{p}} = D\partial_{k}M_{r_{1},\dots, r_{i-1}, r_{i}-1, r_{i+1},\dots, r_{p}} - \partial_{k}DM_{r_{1},\dots, r_{i-1}, r_{i}-1, r_{i+1},\dots, r_{p}}$$
$ = 0,$ because $\partial_{k}M_{r_{1},\dots, r_{i-1}, r_{i}-1, r_{i+1},\dots, r_{p}}\subseteq M_{r_{1},\dots, r_{p}}$. Since $[D, \partial_{k}]\neq 0$, we obtain a contradiction with the induction hypothesis. In a similar way, if $|\beta| > 0$ (so that $\beta_{k}> 0$ for some $k\in\{1,\dots, n\}$), we obtain that if $\partial_{k}\in\Delta_{i}$ ($1\leq i\leq p$) and $\Phi_{D}M_{r_{1},\dots, r_{p}}=0$, then $[D, x_{k}]\neq 0$ and $[D, \partial_{k}]M_{r_{1},\dots, r_{i-1}, r_{i}-1, r_{i+1},\dots, r_{p}} = 0$, contrary to the induction hypothesis. Thus, the mapping $\Phi$ is injective. It follows that
$$\dim_{K}W_{r_{1},\dots, r_{p}}\leq \dim_{K}\left(\Hom_{K}(M_{r_{1},\dots, r_{p}}, M_{2r_{1},\dots, 2r_{p}})\right)$$
\begin{equation}
= \phi_{M}(r_{1},\dots, r_{p})\phi_{M}(2r_{1},\dots, 2r_{p})\hspace{1.4in}
\end{equation}
for all sufficiently large $(r_{1},\dots, r_{p})\in\mathbb{Z}^{p}$. Since $\dim_{K}W_{r_{1},\dots, r_{p}} = \D\prod_{i=1}^{p}{r_{i}+2n_{i}\choose 2n_{i}}$, we obtain (by fixing all $r_{j}$ with $j\neq i$ and considering the limit as $i\rightarrow\infty$ in (4.3)) that for every $i=1,\dots, p$, one has $2n_{i}\leq 2\deg_{t_{i}}\phi_{M}$, so $\deg_{t_{i}}\phi_{M}\geq n_{i}$. Also, $2\deg\phi_{M}\geq 2(n_{1}+\dots + n_{p}) = 2n$, so $\deg\phi_{M}\geq n$.

In order to proof the last statement of the theorem, note that for all sufficiently large $r\in\mathbb{N}$, $\phi_{M}(r,\dots, r)\leq \psi_{M}(pr)$ ($\psi_{M}$ is the Bernstein polynomial of $M$, see Theorem 2.1) and
$\psi_{M}(r)\leq \phi_{M}(r, \dots, r)$. Therefore, if the module $M$ is holomorphic, then $n\leq\deg\phi_{M} = \deg\phi_{M}(t,\dots, t)\leq \deg\psi_{M}(pt) = n$, so $\deg \phi_{M} = n$. Conversely, if
$\deg\phi_{M} = n$, then $$n\leq \deg\psi_{M}(t)\leq\deg\phi_{M}(t, \dots, t) = n,$$ so the module $M$ is holonomic.
\end{proof}
\begin{defn}
The polynomial $\phi_{M}(t_{1}, \dots, t_{p})$, whose existence is established by Theorem 4.2, is called a {\em dimension polynomial} of the $D$-module $M$ associated with the given system of generators $\{f_{1},\dots, f_{m}\}$.
\end{defn}

Generally speaking, different finite systems of generators of $M$ over $A_{n}(K)$ produce different dimension polynomials, however every dimension polynomial carries certain integers that do not depend on generators it is associated with. These integers, that characterize the $D$-module $M$, are called {\em invariants} of a dimension polynomial.  We describe some of such invariants in the next theorem.

\smallskip

For any permutation $(j_{1},\dots, j_{p})$ of the set $\{1,\dots, p\}$, let $\leq_{j_{1},\dots, j_{p}}$ denote a lexicographic order on $\mathbb{N}^{p}$ defined as follows:
$(r_{1},\dots, r_{p})\leq_{j_{1},\dots, j_{p}} (s_{1},\dots, s_{p})$ if and only if either $r_{j_{1}} < s_{j_{1}}$ or there exists $k\in\mathbb{N}$, $1\leq k\leq p-1$, such that $r_{j_{\nu}} = s_{j_{\nu}}$ for $\nu
= 1,\dots, k$ and $r_{j_{k+1}} < s_{j_{k+1}}$. Furthermore, for any set $S\subseteq\mathbb{N}^{p}$, let $S'$ denote the set $\{a\in S\,|\, a$ is a maximal element of $S$ with respect to one of the $p!$ lexicographic orders
$\leq_{j_{1},\dots, j_{p}}\}$.

\smallskip

For example, if $S = \{(1, 1, 2), (3, 1, 1), (2, 3, 0), (3, 0, 2), (1, 4, 0), (2, 3, 1),\\ (0, 4, 1), (0, 3, 3), (1, 0, 3)\}\subseteq\mathbb{N}^{3}$, then
$S' = \{(3, 1, 1), (3, 0, 2), (1, 4, 0), (0, 4, 1), \\(0, 3, 3), (1, 0, 3)\}$.

\begin{thm}
With the above notation, let $M$ be a finitely generated $D$-module and
$$\phi_{M}(t_{1}, \dots, t_{p}) = \D\sum_{i_{1}=0}^{n_{1}}\dots \D\sum_{i_{p}=0}^{n_{p}}a_{i_{1}\dots i_{p}} {t_{1}+i_{1}\choose i_{1}}\dots {t_{p}+i_{p}\choose i_{p}}$$ the dimension polynomial
associated with some finite system of generators $\{f_{1}, \dots, f_{m}\}$ of $M$. (We write $\phi_{M}$ in the form {\em (2.1)} with integer coefficients $a_{i_{1}\dots i_{p}}$.) Let $S(\phi_{M}) = \{(i_{1},\dots, i_{p})\in\mathbb{N}^{p}\,|\, 0\leq i_{k}\leq n_{k}$ $(k=1,\dots, p)$ and $a_{i_{1}\dots i_{p}}\neq 0\}$. Then $d = \deg\,\phi_{M}$, $a_{n_{1}\dots n_{p}}$, elements $(k_{1},\dots, k_{p})\in S(\phi_{M})'$, the corresponding coefficients $a_{k_{1}\dots k_{p}}$, and the coefficients of the terms of total degree $d$ do not depend on the finite system of generators of the $A_{n}(K)$-module $M$ this polynomial is associated with.
\end{thm}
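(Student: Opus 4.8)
The plan is to compare the dimension polynomials attached to two finite systems of generators of $M$ by showing that the corresponding excellent $p$-dimensional filtrations differ only by a bounded shift, and then to read off the asserted invariants from a sandwich of numerical polynomials. So let $\{f_1,\dots,f_m\}$ and $\{f_1',\dots,f_{m'}'\}$ be two finite generating sets of $M$, with associated excellent $p$-dimensional filtrations $\{M_{r_1,\dots,r_p}\}$, $\{M_{r_1,\dots,r_p}'\}$ and dimension polynomials $\phi_M$, $\phi_M'$. Since $\bigcup_{\mathbf r\in\mathbb{N}^p}M_{\mathbf r}'=M$ (writing $\mathbf r$ for $(r_1,\dots,r_p)$, etc.), each $f_i$ lies in some $M_{\mathbf a^{(i)}}'$, and after taking componentwise maxima, using conditions (i) and (iv) of Definition 3.1, one finds $\mathbf h\in\mathbb{N}^p$ with $M_{\mathbf r}\subseteq M_{\mathbf r+\mathbf h}'$ for all $\mathbf r\in\mathbb{N}^p$; enlarging $\mathbf h$, also $M_{\mathbf r}'\subseteq M_{\mathbf r+\mathbf h}$. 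Chaining these inclusions and taking $K$-dimensions (all of the spaces are finite-dimensional because the filtrations are excellent) gives, for all sufficiently large $\mathbf r$,
\[
\phi_M(r_1,\dots,r_p)\leq\phi_M'(r_1+h_1,\dots,r_p+h_p)\leq\phi_M(r_1+2h_1,\dots,r_p+2h_p).
\]
(Shifting $\mathbf r$ also yields $\phi_M'(\mathbf r)\leq\phi_M(\mathbf r+\mathbf h)$, so the roles of $\phi_M$ and $\phi_M'$ will be symmetric.) The theorem then reduces to a statement about a pair $f,g\in\mathbb{Q}[t_1,\dots,t_p]$ that are nonnegative at all large points of $\mathbb{N}^p$ and satisfy $f(\mathbf r)\leq g(\mathbf r+\mathbf h)\leq f(\mathbf r+2\mathbf h)$ for all large $\mathbf r$: such $f,g$ have the same total degree, the same degree in each variable, the same top-degree homogeneous part, and $S(f)'=S(g)'$ with equal coefficients at the points of that common set. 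I use throughout that translating the arguments of a polynomial by a constant vector changes neither its total degree, nor any of its one-variable degrees, nor its top-degree homogeneous part, nor its ``corner'' coefficients.

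For the one-variable degrees, fix $i$ and set $D=\deg_{t_i}f$; write $f=\sum_{j\leq D}p_j(\hat t_i)\,t_i^j$ with $p_D\not\equiv0$. Nonnegativity of $f$ forces $p_D\geq0$ at all large integer points, and since $p_D\not\equiv0$ there is a large integer point $\hat{\mathbf r}$ with $p_D(\hat{\mathbf r})>0$; then $f(r_i,\hat{\mathbf r})\sim p_D(\hat{\mathbf r})\,r_i^{D}$ while $g(r_i+h_i,\hat{\mathbf r}+\hat{\mathbf h})$ is $O(r_i^{\deg_{t_i}g})$ as $r_i\to\infty$, so $f(\mathbf r)\leq g(\mathbf r+\mathbf h)$ forces $\deg_{t_i}g\geq D$; by symmetry $\deg_{t_i}f=\deg_{t_i}g$. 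For the total degree, put $d=\deg f$ and let $H$ be the (nonzero) degree-$d$ homogeneous part of $f$ in the power basis. For a positive rational direction $\lambda=(\lambda_1,\dots,\lambda_p)$ one has $H(\lambda)=\lim_{r\to\infty}f(\lambda_1 r,\dots,\lambda_p r)/r^{d}\geq0$, and since $H\not\equiv0$ some such $\lambda$ has $H(\lambda)>0$; clearing denominators (and using the homogeneity of $H$) we may take $\lambda\in\mathbb{N}^p$. Along this direction $f(\lambda_1 r,\dots)\sim H(\lambda)r^d$ whereas $g(\lambda_1 r+h_1,\dots)$ is $O(r^{\deg g})$, so $\deg g\geq d$; by symmetry $\deg f=\deg g=d$.

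Now that $\deg f=\deg g=d$, divide the sandwich by $r^d$ and let $r\to\infty$ along an arbitrary positive rational direction $\lambda$; the outer terms tend to $H(\lambda)$ and the middle term to $H'(\lambda)$, where $H'$ is the degree-$d$ part of $g$, so $H(\lambda)=H'(\lambda)$ on a Zariski-dense set, whence $H=H'$ as polynomials. Passing back to the binomial basis, the term $\prod_k\binom{t_k+i_k}{i_k}$ with $i_1+\dots+i_p=d$ has leading monomial $\prod_k t_k^{i_k}/i_k!$ and strictly lower-order terms, so the coefficients $a_{i_1\dots i_p}$ with $i_1+\dots+i_p=d$ are determined by $H$; hence all of them agree for $f$ and $g$. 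In particular the coefficient $a_{n_1\dots n_p}$ (with $n_i=\deg_{t_i}\phi_M$ as in (2.1)), which is one of these degree-$d$ coefficients when $n_1+\dots+n_p=d$ and vanishes otherwise, is invariant.

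It remains to deal with the points of $S(f)'$, which I would do by induction on $p$; for $p=1$ the leading term of a one-variable polynomial is clearly forced by the sandwich. For $p>1$, fix one of the $p!$ lexicographic orders — after relabelling the variables, say $\leq_{1,2,\dots,p}$ — and let $(k_1,\dots,k_p)$ be the $\leq_{1,2,\dots,p}$-maximal point of $S(f)$, so that $k_1=\deg_{t_1}f=\deg_{t_1}g$ by the previous step. Let $\bar f(t_2,\dots,t_p)=\lim_{r_1\to\infty}f(r_1,t_2,\dots,t_p)/r_1^{k_1}$ be the top $t_1$-coefficient polynomial of $f$: it is a polynomial in $p-1$ variables, still nonnegative at all large points, whose binomial-basis coefficients are $(1/k_1!)a_{k_1 i_2\dots i_p}$, and similarly for $\bar g$. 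Dividing the sandwich by $r_1^{k_1}$ and letting $r_1\to\infty$ yields $\bar f(\bar{\mathbf r})\leq\bar g(\bar{\mathbf r}+\bar{\mathbf h})\leq\bar f(\bar{\mathbf r}+2\bar{\mathbf h})$ for large $\bar{\mathbf r}=(r_2,\dots,r_p)$. The $\leq_{2,\dots,p}$-maximal point of $S(\bar f)$ is $(k_2,\dots,k_p)$, and by the induction hypothesis it equals the corresponding maximal point of $S(\bar g)$ with equal coefficients there; multiplying by $k_1!$ and restoring the first coordinate shows that $(k_1,\dots,k_p)$ is also the $\leq_{1,2,\dots,p}$-maximal point of $S(g)$ and that $a_{k_1\dots k_p}=a_{k_1\dots k_p}'$. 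Ranging over all $p!$ orders gives $S(f)'=S(g)'$ with matching coefficients, completing the proof. I expect the main obstacle to be the total-degree statement: in contrast to the one-variable degrees, $\deg\phi_M$ need not be attained on the main diagonal $t_1=\dots=t_p$ (a nonzero top form such as $(t_1-t_2)^2$ vanishes there), so one is forced to probe oblique integer directions and to use the nonnegativity of $\dim_K M_{r_1,\dots,r_p}$ to locate a direction along which the top form is strictly positive; keeping the eventual-nonnegativity property intact when passing to the coefficient polynomials $\bar f,\bar g$, and choosing the auxiliary evaluation points off the zero loci of the relevant leading coefficients, require some care as well.
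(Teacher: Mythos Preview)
Your argument is correct, and the initial comparison of filtrations leading to the sandwich inequalities is exactly what the paper does. Where you diverge is in how you extract the invariants from that sandwich. The paper does not argue by induction on $p$ or by probing rational directions; instead it uses iterated exponential substitutions: for the $\leq_{j_1,\dots,j_p}$-maximal element of $S(\phi_M)$ it sets $r_{j_p}=r$, $r_{j_{p-1}}=2^{r}$, $\dots$, $r_{j_1}=2^{r_{j_2}}$ and lets $r\to\infty$, so that the monomials sort themselves by that lexicographic order and the dominant one can be read off directly; a similar tower $w_1=r$, $w_2=2^{w_1}$, $\dots$, $T=2^{w_p}$, $t_i=w_iT$ is used to identify the terms of total degree $d$. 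This is slick and avoids the inductive bookkeeping, but it is also terse---in particular, for the claim $\deg\phi_M=\deg\phi_M^{\ast}$ the paper simply sets all $r_i=r$, which is precisely the diagonal you flagged as potentially problematic when the top form vanishes there. Your route, separating the one-variable degrees first, then using nonnegativity to find an oblique integer direction along which the top homogeneous form is strictly positive, and finally running an induction on $p$ to handle $S(\phi_M)'$, is more explicit about this point and arguably cleaner, at the cost of being longer. Either method delivers the same list of invariants.
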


\begin{proof}
Let $\{h_{1}, \dots, h_{q}\}$ be another finite system of generators of the $A_{n}(K)$-module $M$ and let $M_{r_{1},\dots, r_{p}} = \D\sum_{i=1}^{m}W_{r_{1},\dots, r_{p}}f_{i}$ and
$M'_{r_{1},\dots, r_{p}} = \D\sum_{i=1}^{q}W_{r_{1},\dots, r_{p}}h_{i}$ ($(r_{1},\dots, r_{p})\in\mathbb{Z}^{p}$).  Let
$$\phi^{\ast}_{M}(t_{1}, \dots, t_{p}) = \D\sum_{i_{1}=0}^{n_{1}}\dots \D\sum_{i_{p}=0}^{n_{p}}b_{i_{1}\dots i_{p}} {t_{1}+i_{1}\choose i_{1}}\dots {t_{p}+i_{p}\choose i_{p}}$$
be the dimension polynomial associated with the system of generators $\{h_{1}, \dots, h_{q}\}$ ($b_{i_{1}\dots i_{p}}\in\mathbb{Z}$). Then there exists $(s_{1},\dots, s_{p})\in\mathbb{N}^{p}$ such that
$f_{i}\in \D\sum_{j=1}^{q}W_{s_{1},\dots, s_{p}}h_{j}$ ($1\leq i\leq m$) and $h_{j}\in \D\sum_{i=1}^{m}W_{s_{1},\dots, s_{p}}f_{i}$ ($1\leq j\leq q$). It follows that for all sufficiently large
$(r_{1},\dots, r_{p})\in\mathbb{Z}^{p}$, one has $M_{r_{1},\dots, r_{p}}\subseteq M'_{r_{1}+s_{1},\dots, r_{p}+s_{p}}$ and $M'_{r_{1},\dots, r_{p}}\subseteq M_{r_{1}+s_{1},\dots, r_{p}+s_{p}}$, that is,
\begin{equation}
\phi_{M}(r_{1},\dots, r_{p})\leq \phi_{M}^{\ast}(r_{1}+s_{1},\dots, r_{p}+s_{p})
\end{equation} and
\begin{equation}
\phi^{\ast}_{M}(r_{1},\dots, r_{p})\leq \phi_{M}(r_{1}+s_{1},\dots, r_{p}+s_{p}).
\end{equation}
If we set $r_{i}= r$ for $i=1,\dots, p$ and let $r\rightarrow\infty$ we obtain that $\phi_{M}(t_{1},\dots, t_{p})$ and $\phi_{M}^{\ast}(t_{1},\dots, t_{p})$ have the same degree $d$ and the same
coefficient of the monomial $t_{1}^{n_{1}}\dots t_{p}^{n_{p}}$. If $(k_{1},\dots, k_{p})$ is the maximal element of $S(\phi_{M})'$ with respect to the lexicographic order $\leq_{j_{1},\dots, j_{p}}$, then we set $r_{j_{p}} = r$, $r_{j_{p-1}} = 2^{r_{j_{p}}}= 2^{r}, \dots, r_{j_{1}} = 2^{r_{j_{2}}}$ and let $r\rightarrow\infty$. Then the term $r_{1}^{k_{1}}\dots r_{p}^{k_{p}}$ becomes the greatest one in the canonical representations of $\phi_{M}$ and $\phi_{M}^{\ast}$ with $t_{i}=r_{i}$ ($1\leq i\leq p$). Now, the equalities (4.4) and (4.5) show that $(k_{1},\dots, k_{p})$ is also a maximal element of $S(\phi^{\ast}_{M})'$ with respect to $\leq_{j_{1},\dots, j_{p}}$
and the coefficients of $t_{1}^{k_{1}}\dots t_{p}^{k_{p}}$ in the polynomials $\phi_{M}(t_{1},\dots, t_{p})$ and $\phi^{\ast}_{M}(t_{1},\dots, t_{p})$ are equal.

Finally, let us order the terms of the total degree $d$ in $\phi_{M}$ and $\phi^{\ast}_{M}$ using the lexicographic order $\leq_{p,p-1,\dots, 1}$ on $\mathbb{N}^{p}$, set
$$w_{1} = r, w_{2} = 2^{w_{1}} =2^{r}, \dots, w_{p} = 2^{w_{p-1}}, T = 2^{w_{p}}, t_{i} = w_{i}T\, (1\leq i\leq p)$$ and let $r\rightarrow\infty$. Then the inequalities (4.4) and (4.5) imply that the polynomials $\phi_{M}(t_{1},\dots, t_{p})$ and $\phi^{\ast}_{M}(t_{1},\dots, t_{p})$ have the same maximal multi-degree with respect to $\leq_{p,p-1,\dots, 1}$ among multi-degrees of monomials of total degree $d$, and also coefficients of the corresponding monomials in $\phi_{M}$ and $\phi^{\ast}_{M}$ are equal. In the same way, we obtain similar equalities of terms of total degree $d$ that have maximal multi-degree with respect to each of the $p!$ lexicographic orders $\leq_{j_{1},\dots, j_{p}}$ on $\mathbb{N}^{p}$. This completes the proof. \,$\Box$
\end{proof}

\smallskip

The following example uses the last theorem to illustrate the fact that a multivariate dimension polynomial introduced in Theorem 4.2 carries more invariants of a finitely generated $D$-module than the classical (univariate)
Bernstein polynomial.

\begin{ex}
With the above notation, let $n = 2$, $X_{1} = \{x_{1}\}$, $X_{2} = \{x_{2}\}$, $\Delta_{1}=\{\partial_{1}\}$, and $\Delta_{2}=\{\partial_{2}\}$. Let a left $A_{2}(K)$-module $M$ be generated by one
element $f$ that satisfies the defining equation
\begin{equation}
x_{1}^{\alpha}\partial_{2}^{\beta}f + x_{2}^{\gamma}\partial_{1}^{\alpha}f = 0
\end{equation}
where $\alpha, \beta, \gamma\in\mathbb{N}$ and $\gamma > \beta$. In other words, $M$ is a factor module of a free $A_{2}(K)$-module $E = A_{2}(K)e$ with a free generator $e$ by its $A_{2}(K)$-submodule
$N = A_{2}(K)(x_{1}^{\alpha}\partial_{2}^{\beta} + x_{2 }^{\gamma}\partial_{1}^{\alpha})e$. \, It is easy to see that the set consisting of a single element $g = (x_{1}^{\alpha}\partial_{2}^{\beta} + x_{2}^{\gamma}\partial_{1}^{\alpha})e$ is a Gr\"obner basis of $N$ with respects to the orders $<_{1}, <_{2}$. In this case, the proof of Theorem 4.1 (with the use of formula (2.2)) shows that the dimension polynomial of $M$ associated with the natural $2$-dimensional bifiltration $\{M_{rs} = W_{rs}z\,|\,r, s\in\mathbb{Z}\}$ is as follows:
$$\phi_{M}(t_{1}, t_{2}) = \left[{t_{1}+2\choose 2}{t_{2}+2\choose 2} - {t_{1}+2-\alpha\choose 2}{t_{2} + 2-\beta\choose 2}\right] + \hspace{3in}$$
$${t_{1}+2-\alpha\choose 2}\left[{t_{2} + 2-\beta\choose 2} - {t_{2} + 2-\gamma\choose 2}\right]  = {\frac{1}{2}}\gamma t_{1}^{2}t_{2} + {\frac{1}{2}}\alpha t_{1}t_{2}^{2} + o(t_{1}, t_{2})$$
where $o(t_{1}, t_{2})$ is a polynomial in $\mathbb{Q}[t_{1}, t_{2}]$ of total degree at most $2$. (With the notation of the proof of Theorem 4.1,  the polynomial in the first brackets gives $\Card\,V_{r_{1},\dots, r_{p}}$ while the polynomial $\D{t_{1}+2-\alpha\choose 2}\left[{t_{2} + 2-\beta\choose 2} - \D{t_{2} + 2-\gamma\choose 2}\right]$ gives $\Card\,V'_{r_{1},\dots, r_{p}}$ for all sufficiently large $(r_{1},\dots, r_{p})\in\mathbb{Z}^{p}$.)

The univariate Bernstein polynomial associated with the (one-dimensional) filtration $\{W_{r}e\,|\,r\in\mathbb{Z}\}$ is
$$\psi_{M}(t) = {{t+4\choose 4}} - {{t+4-(\alpha+\gamma)\choose 4}} = {\frac{\alpha+\gamma}{3!}}t^{3}+ o(t^{3}).$$
(It follows from the fact that $\{g\}$ is obviously a Gr\"obner basis of $N$ in the usual sense, see \cite[Section 1.1]{SST}, and the term of $g$ of the maximal total degree is $x_{2}^{\gamma}\partial_{1}^{\alpha}e$.)

Comparing this polynomials $\psi_{M}(t)$ and $\phi_{M}(t_{1}, t_{2})$, we see that the Bernstein polynomial $\psi_{M}(t)$ carries two invariants of the module $M$, its degree 3 and the multiplicity $\alpha+\gamma$. At the same time, the dimension polynomial $\phi_{M}(t_{1}, t_{2})$, according to Theorem 4.4, carries three such invariants, its total degree 4, $\alpha$, and $\gamma$.  Therefore, the polynomial $\phi_{M}(t_{1}, t_{2})$ determines
two parameters $\alpha$ and $\gamma$ of the defining equation (4.6) while $\psi_{M}(t)$  gives just the sum of these parameters.
\end{ex}

\section{Acknowledges}

This research was supported by the NSF grant CCF-2139462.

\end{document}